\def \R {\mathbb{R}}
\def \e {\varepsilon}
\theoremstyle{definition}
\newtheorem{definition}{Definition}[section]
\newtheorem{remark}[definition]{Remark}
\theoremstyle{plain}
\newtheorem{theorem}[definition]{Theorem}
\newtheorem{proposition}[definition]{Proposition}
\newtheorem{lemma}[definition]{Lemma}
\numberwithin{equation}{section}
\begin{document}

\title[Local and nonlocal critical problem]{On a Sobolev critical problem \\ for the superposition \\ of a local and nonlocal operator \\  with the ``wrong sign''}

  \author[S.\,Biagi]{Stefano Biagi}
 \author[S.\,Dipierro]{Serena Dipierro}
 \author[E.\,Valdinoci]{Enrico Valdinoci}
 \author[E.\,Vecchi]{Eugenio Vecchi}
 
 \address[S.\,Biagi]{Dipartimento di Matematica
 	\newline\indent Politecnico di Milano \newline\indent
 	Via Bonardi 9, 20133 Milano, Italy}
 \email{stefano.biagi@polimi.it}
 
 \address[S.\,Dipierro]{Department of Mathematics and Statistics
 	\newline\indent University of Western Australia \newline\indent
 	35 Stirling Highway, WA 6009 Crawley, Australia}
 \email{serena.dipierro@uwa.edu.au}
 
 \address[E.\,Valdinoci]{Department of Mathematics and Statistics
 	\newline\indent University of Western Australia \newline\indent
 	35 Stirling Highway, WA 6009 Crawley, Australia}
 \email{enrico.valdinoci@uwa.edu.au}
 
 \address[E.\,Vecchi]{Dipartimento di Matematica
 	\newline\indent Università di Bologna \newline\indent
 	Piazza di Porta San Donato 5, 40126 Bologna, Italy}
 \email{eugenio.vecchi2@unibo.it}

\subjclass[2020]
{35B33, 35R11, 35A15, 35A16, 49R05}

\keywords{Operators of mixed order, Sobolev inequality, critical exponents, existence theory.}

\thanks{S. Biagi and E. Vecchi are members of INdAM.
	S. Dipierro and E. Valdinoci are members of AustMS.
	S. Dipierro is supported by
	the Australian Research Council 
	Future Fellowship FT230100333.
	E. Valdinoci is supported by the Australian Laureate Fellowship
	FL190100081. S. Biagi and E. Vecchi are supported partially by the PRIN 2022 project 2022R537CS \emph{$NO^3$ - Nodal Optimization, NOnlinear elliptic equations, NOnlocal geometric problems, with a focus on regularity}, funded by the European Union - Next Generation EU and partially by the Indam-GNAMPA project CUP E5324001950001 - {\em Problemi singolari e degeneri: esistenza, unicità e analisi delle proprietà qualitative delle soluzioni}}

\date{\today}

\begin{abstract} We study a critical problem for an operator of mixed order obtained by the superposition of a Laplacian with a fractional Laplacian.

The main novelty is that we consider a mixed operator of the form~$
-\Delta- \gamma(-\Delta)^s$, namely we suppose that the fractional Laplacian has the ``wrong sign'' and can be seen as a nonlocal perturbation of the purely local case, which is needed to produce a nontrivial solution of the critical problem.
\end{abstract}

\maketitle

\section{Introduction} 
Let $\Omega \subset \mathbb{R}^n$ (with $n \geq 3$) be an open and bounded set with smooth enough (say Lipschitz) boundary $\partial \Omega$.
We consider the following problem
\begin{align}\label{eq:Problem}
	\left\{\begin{array}{rl}
		-\Delta u = \gamma \, (-\Delta)^s u + |u|^{2^{\ast}-2}u & \textrm{ in } \Omega,\\
		u\not\equiv0 & \textrm{ in } \Omega,\\
		u= 0 & \textrm{ in } \mathbb{R}^{n} \setminus \Omega.
	\end{array}\right.
\end{align}
The operator~$(-\Delta)^s$ is the fractional Laplacian, defined, for~$s\in(0,1)$, as
$$(-\Delta)^s u(x):=\int_{\R^n}
\frac{u(x)-u(x+y)-u(x-y)}{|y|^{n+2s}}\,dy.$$
The parameter $\gamma$ is assumed to be positive
and suitably small (namely, $\gamma \in (0, C_{emb})$, with a notation that will be clarified here below). 
As usual, $2^{\ast} := \tfrac{2n}{n-2}$ is the Sobolev critical exponent. Our aim is to prove that under the previous assumption on $\gamma$, problem \eqref{eq:Problem} admits a nontrivial weak solution in the appropriate variational space. The cases $n=1$ and $n=2$ are usually considered separately. Indeed, the case $n=2$ leads to critical nonlinearities of exponential type closely related to the Moser-Trudinger inequality.
  
  \vspace{0.1cm}
  
  \noindent -\,\,\emph{Assumptions and motivations}.
  It is known, see e.g.~\cite{DPV}, that the Sobolev space $H^{1}(\mathbb{R}^n)$ continuously embeds into the fractional Sobolev space~$H^{s}(\mathbb{R}^n)$ for~$s\in (0,1)$, i.e. there exists a positive constant $C>0$ (depending on $s$ and $n$) such that
  \begin{equation}\label{eq:embedding}
  C \, [u]^{2}_{s}\leq  \|u\|^2_{H^{1}(\mathbb{R}^n)}, 
  \end{equation}
for every $u \in H^{1}(\mathbb{R}^n)$. 
Here and in what follows, we use the notation  
$$ [u]_{s} := \left(\iint_{\mathbb{R}^{2n}}\dfrac{|u(x)-u(y)|^2}{|x-y|^{n+2s}}\, dx \,dy\right)^{1/2}$$
for the Gagliardo seminorm of~$u$. 
  
Together with the classical Poincar\'e inequality, formula~\eqref{eq:embedding} shows that there exists~$C > 0$ (depending on $\Omega$ as well) such that
  \begin{equation} \label{eq:EmbeddingH01Hs} C[{u}]^2_s\leq \int_\Omega|\nabla u|^2\,dx = \|\nabla u\|^2_{L^2(\Omega)},	
  \end{equation} for every $u\in H_0^1(\Omega)$. Hence, we can define
   	\begin{equation*}
  		C_{emb} := \inf \big\{ \|\nabla u\|^{2}_{L^2(\Omega)} : u\in H_0^1(\Omega),\,[u]_{s}^2 =1\big\}.
  \end{equation*}
  
  The assumption $\gamma \in (0, C_{emb})$ can now be explained: we are interested in dealing with a local and nonlocal operator {\it with the wrong sign}, namely
  \begin{equation}\label{eq:Operator_wrong_sign}
  		-\Delta  - \gamma \, (-\Delta)^s,
  \end{equation}
 but at the same time we need some restrictions on the coefficient $\gamma$ so that the operator is positive definite.
 As a ``historical'' comment, we are unsure who first introduced the terminology ``wrong sign'' to denote superpositions of operators with coefficients of different signs; however, this jargon already appears in~\cite{BDVV-CPDE22}.
 	The jargon ``mixed local and nonlocal operator'' was also formerly used in~\cite{BDVV2-2021, DV-niche}.
 	Mixed operators with the ``wrong sign'' were also studied specifically in~\cite{PESP:2024} and referred to as
 	``local and
 	nonlocal operators'' (however, \cite{PESP:2024}
 	cannot include the specific case of a Laplacian minus the fractional Laplacian, see the assumptions
 	after~(1.1) there). We also mention \cite{MMV, GMV, CCMV} where the authors refer to similar operators as {\em indefinite} or {\em nonpositive} operators.
 	
 	An extensive study of
 	L\'evy measures associated with integro-differential operators
 	was presented in~\cite{GARR}. In this spirit, one can also consider superposition
 	operators averaged over a measure.
 	The case of a signed measure can be seen as a generalization to a continuum (or to an infinite series)  of
 	the sum of two operators in which the lower-order one possesses the wrong sign:
 	for this setting, see~\cite{RE2, RE22, RE5, RE6, RE7, RE8, PESP:2024, RE10, RE11, RE12, RE13}.
 	
 	In any case, the study of operators of mixed integer and fractional order is classical
 	(see e.g.~\cite{COURR} in relation with the maximum principle), but it has been recently
 	experiencing an intense revival (see e.g., among the many,
 	\cite[equation~(1.2)]{JA06},
 	\cite[Section~3.2]{SILVE06},
 	\cite[Remark 5.6]{LLAVE09},
 	and~\cite[equation~(2)]{ALI20}).

The main motivation behind the study of problem \eqref{eq:Problem} comes from the recent works~\cite{BDVV5,BaDPQ}. The starting point is the following simple observation: let $\Omega \subseteq \mathbb{R}^{n}$ be an open set and $s \in (0,1)$, then there exists a constant $\mathcal{S}_{n,s}(\Omega)>0$ such that
  \begin{equation}\label{eq:mixed_Sobolev}
  	\mathcal{S}_{n,s}(\Omega)  \|u\|^{2}_{L^{2^{\ast}}(\mathbb{R}^{n})} \leq \|\nabla u\|^{2}_{L^{2}(\mathbb{R}^{n})} + [u]^2_{s},
  \end{equation}
  for every $u \in C^{\infty}_{0}(\mathbb{R}^{n})$. This is clearly a consequence of the classical Sobolev inequality.
  
In principle, the constant $\mathcal{S}_{n,s}(\Omega)$ could depend on~$n$, $s$ and~$\Omega$, but this is actually not the case (see~\cite[Theorem~1.1]{BDVV5}), indeed 
  \begin{equation*}
  	\mathcal{S}_{n,s}(\Omega) = S_{n}
  \end{equation*}
   where $S_n$ is the best constant for the classical Sobolev inequality, achieved by the Aubin-Talenti functions.
  
  Moreover, the energy functional 
  \begin{equation}\label{eq:Mixed_Energy}
  	E(u) := \dfrac{1}{2} \int_{\Omega}|\nabla u|^2 \, dx + \dfrac{1}{2}[u]^{2}_s
  \end{equation}
exhibits a lack of scaling invariance, namely
  \begin{equation}\label{eq:Lack_Scaling}
  	E(u_{t}) = \dfrac{1}{2} \int_{\Omega}|\nabla u|^2 \, dx + \dfrac{t^{2s-2}}{2}[u]^{2}_s, 
  \end{equation}
where $u_{t}(x) := t^{(n-2)/2}u(tx)$ is the rescaled function which preserves the $H^{1}_{0}$-norm of $u$. This second simple remark allows to show that the best constant in the local-nonlocal Sobolev inequality \eqref{eq:mixed_Sobolev} is never achieved, see~\cite[Theorem~1.2]{BDVV5}.

Since the first variation of the energy functional $E$ in \eqref{eq:Mixed_Energy} is given by the local-nonlocal operator 
  \begin{equation*}
  	-\Delta + (-\Delta)^s,
  \end{equation*}  
  it becomes natural to attack the study of Sobolev critical problems in bounded domains, with leading operator $-\Delta + (-\Delta)^s$. The first results in this direction are contained in~\cite{BDVV5,BiagiVecchi,BiagiVecchi2} and are all inspired by the seminal paper~\cite{BrNir}, where the authors showed the importance of adding some kind of perturbation (linear, superlinear, sublinear) when looking for positive weak solutions. In all the mentioned papers~\cite{BDVV5,BiagiVecchi,BiagiVecchi2} the main difficulties (and differences from the purely local case) come once again from the lack of scaling invariance in \eqref{eq:Lack_Scaling}, especially when one has to compute the fractional seminorm of a suitably localized version of the Aubin-Talenti functions, see e.g. (iv) in Lemma~\ref{lem:andamenti} below.
 
 \medskip
 
A natural question (raised to us by S. Terracini) is what happens when one considers a Sobolev critical problem with a leading operator of the form \eqref{eq:Operator_wrong_sign}. This could also be interpreted in the spirit of~\cite{BrNir} as a nonlocal perturbation of the purely local problem.
Our main result
 reads as follows:
 
 \begin{theorem}\label{thm:Main}
 	Let $n\geq 3$ and $\Omega \subset \mathbb{R}^{n}$ be an open and bounded set with Lipschitz boundary.
 	Then, the following assertions hold:
 	\begin{enumerate}
 	\item[(1)] \emph{(}\textbf{High-dimensional case}\emph{)} If $n\geq 5$,	 for every $\gamma\in (0,C_{emb})$ there exists a nontrivial
 	weak solution $u \in \mathcal{X}^{1,2}(\Omega)$ to problem \eqref{eq:Problem}.
 	\vspace{0.1cm}
 	
 	\item[(2)] \emph{(}\textbf{Low-dimensional case}\emph{)} If $n = 3,4$, there exists $\gamma^*\in [0,C_{emb})$ such that
 	problem \eqref{eq:Problem} has a nontrivial
 	weak solution $u\in\mathcal{X}^{1,2}(\Omega)$ for every~$\gamma\in(\gamma^*,C_{emb})$.
 	\end{enumerate}
 \end{theorem}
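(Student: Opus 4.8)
The proof follows the classical Brezis--Nirenberg strategy, adapted to the mixed local/nonlocal setting with the ``wrong sign''. The plan is to work in the Hilbert space $\mathcal{X}^{1,2}(\Omega)$ (the closure of $C^\infty_0(\Omega)$ with respect to $\|\nabla\cdot\|_{L^2}$, which by \eqref{eq:EmbeddingH01Hs} controls $[\cdot]_s$) and to study the energy functional
\[
\mathcal{E}_\gamma(u) := \frac12\int_\Omega|\nabla u|^2\,dx - \frac{\gamma}{2}[u]_s^2 - \frac{1}{2^*}\int_\Omega |u|^{2^*}\,dx .
\]
Because $\gamma < C_{emb}$, the quadratic part $\|u\|_\gamma^2 := \|\nabla u\|_{L^2}^2 - \gamma[u]_s^2$ is equivalent to $\|\nabla u\|_{L^2}^2$, so $\mathcal{E}_\gamma$ has the mountain pass geometry: a strict local minimum at $0$ and negativity along rays. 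First I would define the mountain pass level $c_\gamma$ and, by the Ambrosetti--Rabinowitz scheme, produce a Palais--Smale sequence at that level. The central point, exactly as in Brezis--Nirenberg, is that the $(PS)_c$ condition holds below the threshold
\[
c_\gamma < \frac{1}{n}\, S_n^{n/2},
\]
where $S_n$ is the classical Sobolev constant; this is the critical level at which compactness can fail because of bubbling. Once a bounded $(PS)$ sequence below this level is shown to converge strongly (via the concentration--compactness alternative, ruling out vanishing by the mountain pass geometry and ruling out the bubble by the strict inequality), its limit is a nontrivial weak solution of \eqref{eq:Problem}.

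The heart of the matter is therefore the strict estimate $c_\gamma < \tfrac1n S_n^{n/2}$, obtained by testing the functional along a suitable path through a truncated Aubin--Talenti bubble. Let $\eta\in C^\infty_0(\Omega)$ be a cutoff equal to $1$ near a point $x_0\in\Omega$, and for $\e>0$ small set $u_\e(x) := \eta(x)\,\e^{-(n-2)/2}U(x/\e)$, where $U$ is the extremal for $S_n$. The well-known local expansions give
\[
\|\nabla u_\e\|_{L^2}^2 = S_n^{n/2} + O(\e^{n-2}),\qquad
\|u_\e\|_{L^{2^*}}^{2} = S_n^{n/2\,\cdot\,2/2^*} + O(\e^{n}),
\]
so that $\sup_{t\ge0}\mathcal{E}_\gamma(tu_\e)$ equals $\tfrac1n S_n^{n/2}$ minus a contribution coming from the $-\tfrac{\gamma}{2}[u_\e]_s^2$ term plus the usual local error $O(\e^{n-2})$. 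Here the ``wrong sign'' is actually an \emph{advantage}: the fractional term enters with a minus and thus \emph{lowers} the energy, which is why no extra linear perturbation (as in Brezis--Nirenberg) is needed. The decisive quantitative input is the asymptotics of $[u_\e]_s^2$ as $\e\to0^+$ — this is precisely the delicate computation alluded to in the paragraph preceding Lemma~\ref{lem:andamenti} (item (iv) there), and it is governed by the lack of scaling invariance \eqref{eq:Lack_Scaling}: one expects $[u_\e]_s^2 \sim c\,\e^{2-2s}$ (up to possible logarithmic corrections in borderline dimensions). Comparing the gain $\gamma\,c\,\e^{2-2s}$ against the loss $O(\e^{n-2})$ then yields the strict inequality once $\e$ is chosen small, \emph{provided} $2-2s < n-2$, i.e. whenever the nonlocal gain dominates the local error.

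This dimensional comparison is exactly what splits the two cases of the theorem. For $n\ge5$ one has $n-2\ge3 > 2 > 2-2s$ for every $s\in(0,1)$, so the fractional gain $\e^{2-2s}$ always beats the local remainder $\e^{n-2}$, and the strict inequality $c_\gamma<\tfrac1n S_n^{n/2}$ holds for every $\gamma\in(0,C_{emb})$; this proves part (1). For $n=3,4$ the local error $\e^{n-2}$ is of order $\e$ or $\e^2$, which is no longer negligible compared to (or is comparable to) $\e^{2-2s}$ for $s$ close to $1$; in this regime the sign of the leading coefficient in $\sup_t\mathcal{E}_\gamma(tu_\e)-\tfrac1n S_n^{n/2}$ depends on the balance between $\gamma$ (multiplying the nonlocal gain) and the fixed local constant, so one only obtains the strict inequality when $\gamma$ is close enough to $C_{emb}$, which defines the threshold $\gamma^*$ and gives part (2). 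I would make this rigorous by carrying out the full expansion of $\sup_{t\ge0}\mathcal{E}_\gamma(tu_\e)$ and optimizing first in $t$, then in $\e$; the main obstacle, and the only genuinely non-routine step, is the sharp asymptotic expansion of $[u_\e]_s^2$, including the precise constant and the identification of any logarithmic terms in the low-dimensional borderline cases — everything else (mountain pass geometry, Palais--Smale analysis below the critical level, concentration--compactness, strong convergence) is standard once that estimate is in hand.
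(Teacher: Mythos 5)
Your outline matches the paper's strategy only partially. The mountain--pass reformulation you use is equivalent to the paper's constrained minimization: the paper studies
$S(\gamma)=\inf\{Q_\gamma(u):\|u\|_{L^{2^*}}=1\}$ with $Q_\gamma(u)=\|\nabla u\|_{L^2}^2-\gamma[u]_s^2$, and indeed one has $c_\gamma=\tfrac1n S(\gamma)^{n/2}$, so your threshold $c_\gamma<\tfrac1n S_n^{n/2}$ is the same as the paper's $S(\gamma)<S_n$. For $n\ge5$ your route (cutoff Aubin--Talenti competitor, expansion of $[U_\varepsilon]_s^2$ to order $\varepsilon^{2-2s}$, power comparison $2-2s<n-2$) is precisely the paper's Proposition~\ref{prop:Stima_Migliore} and Proposition~\ref{prop:Using_Ueps}.

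Where there is a genuine gap is your treatment of $n=3,4$. You try to push the same explicit competitor computation and locate the threshold $\gamma^*$ through a ``balance between $\gamma$ and the fixed local constant,'' but the competition in $\sup_t\mathcal{E}_\gamma(tU_\varepsilon)-\tfrac1n S_n^{n/2}\approx a\,\varepsilon^{n-2}-b\,\gamma\,\varepsilon^{2-2s}$ is between \emph{powers of $\varepsilon$}, not between $\gamma$ and $a$: if $2-2s>n-2$ the local error wins for every $\gamma$ and the competitor never drops below $S_n$; if $2-2s<n-2$ it works for every $\gamma>0$; only in the borderline case $2-2s=n-2$ (i.e.\ $n=3$, $s=\tfrac12$) can a threshold appear from such a balance. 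You also have the $s$-dependence backwards (you say the local error is comparable to $\varepsilon^{2-2s}$ ``for $s$ close to $1$,'' whereas $\varepsilon^{2-2s}$ dominates when $s$ is close to $1$). The paper does something altogether different in low dimensions, and it makes no use of the Aubin--Talenti expansion there at all: it shows in Lemma~\ref{lem:ContinuityS} that $\gamma\mapsto S(\gamma)$ is continuous and non-increasing on $(0,C_{emb})$, with $S(\gamma)\to S_n$ as $\gamma\to0^+$ and, crucially, $S(\gamma)\to0$ as $\gamma\to C_{emb}^-$. The latter limit is obtained by testing $Q_\gamma$ with the extremal $\Phi_0$ of the embedding constant, which exists precisely because the embedding $\mathcal{X}^{1,2}(\Omega)\hookrightarrow H^s(\mathbb{R}^n)$ is compact, giving $0<S(\gamma)\le Q_\gamma(\Phi_0)=C_{emb}-\gamma$. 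Then $\gamma^*:=\inf\{\gamma:S(\gamma)<S_n\}$ does the job. This implicit argument works in all dimensions and sidesteps the failure of the competitor estimate when $2-2s\geq n-2$; your proposal, as written, does not reach the conclusion for a substantial range of $(n,s)$ in the low-dimensional case.
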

 
 We mention~\cite[Corollary 5.4]{RE2}
 	for a result close in spirit to Theorem~\ref{thm:Main}
 	here. The setting in~\cite{RE2} is however structurally different from the one presented here
 	and relies on the analysis of the Dancer-Fu\v{c}\'\i{}k spectrum. Also,
 	the coefficients~$a$ and~$b$ in~\cite{RE2} were considered as strictly positive,
 	while the setting here would correspond to~$a=b=0$, which was not explicitly addressed
 	in~\cite{RE2}.
 	
 We postpone to Section~\ref{sec:Prel}, and in particular to \eqref{eq:def_X12}, the exhaustive discussion on the function space $\mathcal{X}^{1,2}(\Omega)$ and several equivalent definitions. For the moment, we just point out that
 one of the equivalent definitions reads
 $$\mathcal{X}^{1,2}(\Omega) = \big\{u\in H^1(\R^n):\,\text{$u|_\Omega\in H_0^1(\Omega)$ and 
  		$u\equiv 0$ a.e.\,in $\R^n\setminus\Omega$}\big\}.$$
 
The proof of Theorem~\ref{thm:Main} is contained in Section~\ref{sec:mainproof87654}.
  
  \section{Preliminary results}\label{sec:Prel}
  \noindent {\bf Notations.} Throughout the paper, we tacitly
  exploit all the notation listed below; we thus refer the Reader to this list
  for any non-standard notation encountered.
  \begin{itemize}
  	\item $\Omega \subset \mathbb{R}^{n}$ is an open and bounded set with Lipschitz boundary $\partial \Omega$.
  	\item Given any set $E$, $|E|$ denotes its $n$-dimensional Lebesgue measure.
  	\item Given any set $E$, $E^c$ denotes its complement (in $\mathbb{R}^n$), namely~$E^c:=\R^n\setminus E$.
  	\item Given $x_0 \in \mathbb{R}^n$ and $r>0$, we denote by $B_{r}(x_0)$ the Euclidean open ball of center $x_0$ and radius $r$. When $x_0=0$, we will omit it and simply write~$B_r$.
  	\item Given any two vectors $v,w \in \mathbb{R}^{n}$, we denote by
  	$v\cdot w$ the usual scalar product.
  	\item Given any $p \in [1,n)$ we denote by $p^{*}$ the associated Sobolev exponent (with respect to Euclidean space $\mathbb{R}^n$), that is
  	$$p^{*}  := \frac{np}{n-p}.$$
  	\item Given any $r \in (1,+\infty)$, we denote by $r'$ the conjugate exponent of $r$ in the usual H\"older inequality, that is, 
  	$$r'  := \frac{r}{r-1}.$$
  	\item Given a reflexive Banach space $V$ with dual space $V'$, we denote by 
  	$\langle \cdot,\cdot\rangle$
  	the duality product. 
  	\item Given a measurable set $E$, we denote by $\chi_{E}$ its characteristic function.
  	\item Given any function $v$, we denote the positive and negative part of $v$ by 
  	\begin{equation}\label{eq:def_positive_negative_part}
  		v^{+}:= \max\{v,0\} \quad \textrm{ and } \quad v^{-}:= -\min\{v,0\}.
  	\end{equation}
  	\item $S_n$ will denote the best constant in the classical Sobolev inequality which is achieved by the family of Aubin-Talenti functions defined, for every~$y\in\R^n$ and~$\varepsilon>0$, as \begin{equation}\label{eq:def_Talentiane}
  		V_{\varepsilon,y}(x) := \dfrac{\varepsilon^{(n-2)/2}}{(\varepsilon^{2} + |x-y|^2)^{(n-2)/2}}.
  	\end{equation}
  	We will avoid writing the dependence on $y$ when $y=0$.
  \end{itemize}
  
  \medskip 
  
  We now set the adequate functional setting to study problem \eqref{eq:Problem}. We refer to e.g.~\cite[Section~2]{BDVV5} for more details.
  
  Let~$s\in (0,1)$ and let $u:\R^n\to\R$ be a measurable function. We set
  \begin{equation} \label{eq:GagliardoSeminorm}
  	[u]_s := \left(\iint_{\R^{2n}}\frac{|u(x)-u(y)|^2}{|x-y|^{n+2s}}\,dx\,dy\right)^{1/2},
  \end{equation}
  and we refer to $[u]_s$ as the \emph{Gagliardo seminorm} of $u$ (of order $s$).
  
  Let $\varnothing\neq \Omega\subseteq\R^n$ (here $n\geq 3$ would suffice) be an open set, not necessarily bounded. We define the function space
  $\mathcal{X}^{1,2}(\Omega)$ as 
  \begin{equation*}
  	\mathcal{X}^{1,2}(\Omega) := \overline{C_0^\infty(\Omega)}^{\,\,\rho(\cdot)}
  \end{equation*}
   i.e. as  the com\-ple\-tion
  of $C_0^\infty(\Omega)$ with respect to the norm  
  \begin{equation}\label{eq:def_rho}
  	\rho(u) := \left(\|\nabla u\|^2_{L^2(\R^n)}+[u]^2_s\right)^{1/2}.
  \end{equation}
    We stress that the norm $\rho(\cdot)$ is induced by the scalar product
  		$$\langle u,v\rangle_{\rho} := \int_{\R^n}\nabla u\cdot\nabla v\,dx
  		+ \iint_{\R^{2n}}\frac{(u(x)-u(y))(v(x)-v(y))}{|x-y|^{n+2s}}\,dx\,dy,$$
  		and therefore $\mathcal{X}^{1,2}(\Omega)$ is a Hilbert space.

For our purposes, we are interested in the case~$\Omega$ bounded. Recalling the continuous embedding given by~\eqref{eq:embedding}, and combining it with the classical Poincar\'e inequality, we find that
  $\rho(\cdot)$ and the full $H^1$-norm in $\R^n$
  are actually equivalent on the space $C^\infty_0(\Omega)$, and hence
  \begin{equation}\label{eq:def_X12}
  	\begin{aligned}
  	\mathcal{X}^{1,2}(\Omega) & = \overline{C_0^\infty(\Omega)}^{\,\,\|\cdot\|_{H^1(\R^n)}} \\
  	& = \big\{u\in H^1(\R^n):\,\text{$u|_\Omega\in H_0^1(\Omega)$ and 
  		$u\equiv 0$ a.e.\,in $\R^n\setminus\Omega$}\big\}.
  \end{aligned}
\end{equation}

We stress that in \cite[equation~(2.10)]{RE7}, the authors chose a different norm modeled on the energy associated to the local-nonlocal operator. In  our notation their choice reads as 
	$$\left(\|\nabla u\|^2_{L^{2}(\mathbb{R}^n)} - \gamma [u]_s^2\right)^{1/2}.$$
	It turns out that their choice is actually equivalent to the one taken in the present paper, thanks to~\cite[Lemma~2.1]{RE2}. 

\begin{remark} \label{rem:PropX12Usare} On account of \eqref{eq:def_X12}, 
we derive the following useful facts.
\begin{itemize}
\item[i)] $\mathcal{X}^{1,2}(\Omega)$ is \emph{continuously embedded into $L^p(\Omega)$} for every $1\leq p\leq 2^*$; moreover, if $1\leq p < 2^*$, this embedding is \emph{compact}.
\vspace{0.1cm}

\item[ii)] Taking into account the continuous embedding in \eqref{eq:embedding},
and since this embedding \emph{is actually compact} (see, e.g.,~\cite[Theorem~1.3]{emBE}, applied here with~$p:=\widetilde p:=2$, $s:=1$ and~$\widetilde s:=s$),
we get that, for all~$s\in(0,1)$,
$$\text{$\mathcal{X}^{1,2}(\Omega)$ is compactly embedded into $H^s(\mathbb{R}^n)$}.$$
\item[iii)]
Moreover, it is easily seen that
\begin{equation} \label{eq:def_Cemb}
	C_{emb} = \inf\big\{\|\nabla u\|^2_{L^2(\Omega)}:\,u\in\mathcal{X}^{1,2}(\Omega),\,[u]^2_s = 1\big\},
\end{equation}
and this infimum is achieved, that is, there exists a function $\Phi_0\in\mathcal{X}^{1,2}(\Omega)$ such that 
$$[\Phi_0]^2_s = 1\quad\text{and}\quad \|\nabla \Phi_0\|^2_{L^2(\Omega)} = C_{emb}.$$
\end{itemize}
\end{remark}

We end this section by giving the precise definition of
 \emph{weak solution} to~\eqref{eq:Problem}.
  
 \begin{definition}\label{def:WeakSol}
 	We say that a function $u:\mathbb{R}^n\to\mathbb{R}$ is a \emph{weak solution}
 	to problem~\eqref{eq:Problem} if it satisfies the following properties:
 	\begin{itemize}
 		\item[1)] $u\in\mathcal{X}^{1,2}(\Omega)$;
 		\item[2)] for every test function $v\in \mathcal{X}^{1,2}(\Omega)$, we have
 		\begin{align*}
 		 \int_{\R^n}\nabla u\cdot\nabla v\,dx
 		 -\gamma \, \iint_{\R^{2n}}\frac{(u(x)-u(y))(v(x)-v(y))}{|x-y|^{n+2s}}\,dx\,dy = \int_\Omega |u|^{2^*-2}uv\,dx.	
 		\end{align*}
 	\end{itemize}
 \end{definition} 
 
  \section{Proof of Theorem~\ref{thm:Main}}\label{sec:mainproof87654}
  We follow the ideas presented in~\cite{BrNir} combined with the crucial expansion provided by Proposition~\ref{prop:Stima_Migliore}.
  To this aim, let us first define the functional
  \begin{equation}\label{eq:Def_Qgamma}
 	Q_{\gamma}(u) := \int_{\Omega}|\nabla u|^{2} \, dx - \gamma \, [u]^{2}_{s}, \quad u \in \mathcal{X}^{1,2}(\Omega),
  \end{equation}
 and let us consider the following constrained minimization problem
\begin{equation}\label{eq:Def_Sgamma}
		S(\gamma) := \inf \big\{ Q_{\gamma}(u) : u \in \mathcal{X}^{1,2}(\Omega), \|u\|_{L^{2^{\ast}}(\Omega)}=1 \big\}.
\end{equation}
Obviously, by formally choosing $\gamma =0$, the above minimization problem reduces to finding the best Sobolev constant $S_n$ in $\Omega$, which is known to be never achieved; as a consequence, we have the following upper bound for $S(\gamma)$:
$$S(\gamma)\leq S_n\quad\text{for every $0<\gamma< C_{emb}$}.$$

We also notice that, since we are assuming $0<\gamma<C_{emb}$ (recall that $C_{emb}$ is the constant defined in \eqref{eq:def_Cemb}), for every 
$u\in\mathcal{X}^{1,2}(\Omega)$ we have
\begin{align*}
 Q_\gamma(u) & = \int_{\Omega}|\nabla u|^{2} \, dx - \gamma \, [u]^{2}_{s}\geq 
 \left(1-\frac{\gamma}{C_{emb}}\right)\int_{\Omega}|\nabla u|^{2} \, dx \\
 & \geq \left(1-\frac{\gamma}{C_{emb}}\right)S_n > 0.
 \end{align*}
Gathering the last two formulas in display, we conclude that
\begin{equation} \label{eq:BoundSgammaUpLow}
	0<\left(1-\frac{\gamma}{C_{emb}}\right)S_n\leq S(\gamma)\leq S_n\quad\text{for every $0<\gamma< C_{emb}$}.
\end{equation}

Now, the key tool for the proof of Theorem~\ref{thm:Main} is the following.
\begin{proposition}\label{prop:Minimum_Achieved}
	Let $n \geq 3$. If $S(\gamma) < S_n$, the infimum in \eqref{eq:Def_Sgamma} is achieved.
	In this case, if $u_0\in \mathcal{X}^{1,2}(\Omega)$ achieves this infimum, then
	\begin{equation} \label{eq:tildeuSolutiondaMinimo}
\widetilde{u} = S(\gamma)^{1/(2^*-2)}u_0		
	\end{equation}
	is a nontrivial weak solution to problem \eqref{eq:Problem}.
	\end{proposition}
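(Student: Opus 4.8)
The plan is to prove Proposition~\ref{prop:Minimum_Achieved} in two stages: first, the attainment of the infimum $S(\gamma)$ under the hypothesis $S(\gamma)<S_n$, via a concentration-compactness argument; second, the verification that a suitable rescaling of the minimizer solves \eqref{eq:Problem}, which is a routine Lagrange-multiplier computation.

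For the \textbf{attainment step}, let $(u_k)_k\subset\mathcal{X}^{1,2}(\Omega)$ be a minimizing sequence, so $\|u_k\|_{L^{2^*}(\Omega)}=1$ and $Q_\gamma(u_k)\to S(\gamma)$. From the lower bound in \eqref{eq:BoundSgammaUpLow}, namely $Q_\gamma(u_k)\geq\bigl(1-\gamma/C_{emb}\bigr)\int_\Omega|\nabla u_k|^2\,dx$, the sequence is bounded in $\mathcal{X}^{1,2}(\Omega)$; hence, up to a subsequence, $u_k\rightharpoonup u_0$ weakly in $\mathcal{X}^{1,2}(\Omega)$, $u_k\to u_0$ strongly in $L^p(\Omega)$ for every $p<2^*$ (by item (i) of Remark~\ref{rem:PropX12Usare}), $[u_k-u_0]_s\to 0$ by the \emph{compact} embedding into $H^s$ (item (ii) of Remark~\ref{rem:PropX12Usare}, which gives $[u_k]_s\to[u_0]_s$), and $u_k\to u_0$ a.e. Write $v_k:=u_k-u_0$. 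The Brezis–Lieb lemma gives $\|u_k\|_{L^{2^*}}^{2^*}=\|u_0\|_{L^{2^*}}^{2^*}+\|v_k\|_{L^{2^*}}^{2^*}+o(1)$, and since $2^*>2$ one deduces $1=\|u_0\|_{L^{2^*}}^2+\|v_k\|_{L^{2^*}}^2+o(1)$ (using $a^{2/2^*}+b^{2/2^*}\geq(a+b)^{2/2^*}$ and the normalization to pass between $2^*$-th powers and squares; here the convexity inequality $\|u_0\|_{2^*}^2+\|v_k\|_{2^*}^2\le (\|u_0\|_{2^*}^{2^*}+\|v_k\|_{2^*}^{2^*})^{2/2^*}+o(1)=1+o(1)$ is what is actually needed). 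By weak convergence of gradients, $\int_\Omega|\nabla u_k|^2\,dx=\int_\Omega|\nabla u_0|^2\,dx+\int_\Omega|\nabla v_k|^2\,dx+o(1)$; combined with $[v_k]_s\to 0$ this yields
\begin{equation*}
S(\gamma)+o(1)=Q_\gamma(u_k)=Q_\gamma(u_0)+\int_\Omega|\nabla v_k|^2\,dx+o(1).
\end{equation*}
Now $Q_\gamma(u_0)\geq S(\gamma)\|u_0\|_{L^{2^*}}^2$ (by definition of $S(\gamma)$, or trivially if $u_0=0$), and by the mixed Sobolev inequality \eqref{eq:mixed_Sobolev} together with $\mathcal{S}_{n,s}(\Omega)=S_n$, plus $[v_k]_s\to0$, we get $\int_\Omega|\nabla v_k|^2\,dx\geq S_n\|v_k\|_{L^{2^*}}^2+o(1)$. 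Hence
\begin{equation*}
S(\gamma)\geq S(\gamma)\|u_0\|_{L^{2^*}}^2+S_n\|v_k\|_{L^{2^*}}^2+o(1)\geq S(\gamma)\bigl(\|u_0\|_{L^{2^*}}^2+\|v_k\|_{L^{2^*}}^2\bigr)+(S_n-S(\gamma))\|v_k\|_{L^{2^*}}^2+o(1).
\end{equation*}
Since $\|u_0\|_{L^{2^*}}^2+\|v_k\|_{L^{2^*}}^2\to 1$, the first term on the right is $S(\gamma)+o(1)$, forcing $(S_n-S(\gamma))\limsup_k\|v_k\|_{L^{2^*}}^2\leq 0$. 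As $S(\gamma)<S_n$ strictly, we conclude $\|v_k\|_{L^{2^*}}\to 0$, i.e. $u_k\to u_0$ strongly in $L^{2^*}(\Omega)$, whence $\|u_0\|_{L^{2^*}(\Omega)}=1$ and, by weak lower semicontinuity of $Q_\gamma$ (which is a nonnegative quadratic form on $\mathcal{X}^{1,2}(\Omega)$ thanks to $\gamma<C_{emb}$, equivalent to $\rho(\cdot)^2$ up to the strongly convergent term $\gamma[u_k]_s^2\to\gamma[u_0]_s^2$), $Q_\gamma(u_0)\leq\liminf_k Q_\gamma(u_k)=S(\gamma)$. Thus $u_0$ is a minimizer.

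For the \textbf{solution step}, since $u_0$ minimizes the $C^1$ functional $Q_\gamma$ on the $C^1$ manifold $\{\|u\|_{L^{2^*}(\Omega)}=1\}$, the Lagrange multiplier rule gives $\lambda\in\R$ with $\langle u_0,v\rangle_\rho$ replaced appropriately: explicitly, $\int_\Omega\nabla u_0\cdot\nabla v\,dx-\gamma\iint\frac{(u_0(x)-u_0(y))(v(x)-v(y))}{|x-y|^{n+2s}}\,dx\,dy=\lambda\int_\Omega|u_0|^{2^*-2}u_0 v\,dx$ for all $v\in\mathcal{X}^{1,2}(\Omega)$. Testing with $v=u_0$ and using $\|u_0\|_{L^{2^*}}=1$ yields $\lambda=Q_\gamma(u_0)=S(\gamma)>0$. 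Setting $\widetilde{u}=S(\gamma)^{1/(2^*-2)}u_0$ and using $2^*-1=\frac{2^*-2}{1}\cdot\frac{2^*-1}{2^*-2}$—more precisely, the homogeneity $|\widetilde u|^{2^*-2}\widetilde u=S(\gamma)^{(2^*-1)/(2^*-2)}|u_0|^{2^*-2}u_0$ and linearity $\langle\widetilde u,v\rangle_\rho=S(\gamma)^{1/(2^*-2)}\langle u_0,v\rangle_\rho$ applied to the rescaled operator (note $\langle\cdot,\cdot\rangle_\rho$ here must be read with the sign of the wrong-sign operator, i.e. $\int\nabla u\cdot\nabla v-\gamma(\cdots)$, which is the bilinear form associated with $Q_\gamma$)—one checks that the Lagrange identity becomes exactly condition 2) of Definition~\ref{def:WeakSol} with $S(\gamma)$-powers matching since $\frac{2^*-1}{2^*-2}=1+\frac{1}{2^*-2}$. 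Nontriviality is immediate because $\|\widetilde u\|_{L^{2^*}}=S(\gamma)^{1/(2^*-2)}>0$.

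The main obstacle is the attainment step, and specifically making the Brezis–Lieb/concentration-compactness splitting rigorous: one must carefully justify that the defect of compactness in $L^{2^*}$ is controlled \emph{only} by the local gradient energy (the fractional part contributing nothing in the limit because of the \emph{compact} $H^s$-embedding of Remark~\ref{rem:PropX12Usare}(ii)), so that the strict inequality $S(\gamma)<S_n$ can be leveraged against the optimal Sobolev constant $S_n=\mathcal{S}_{n,s}(\Omega)$. Everything else — boundedness of the minimizing sequence, weak lower semicontinuity of the positive-definite form $Q_\gamma$, and the Lagrange-multiplier identification — is standard once $\gamma<C_{emb}$ is in force.
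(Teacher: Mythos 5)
Your proposal follows essentially the same route as the paper: boundedness of the minimizing sequence from $\gamma<C_{emb}$, weak and a.e.\ convergence plus \emph{strong} convergence of the Gagliardo seminorm via the compact $H^s$-embedding, the Brezis--Lieb decomposition, the classical Sobolev inequality applied to $v_k=u_k-u_0$, and the elementary subadditivity of $t\mapsto t^{2/2^*}$ to leverage $S(\gamma)<S_n$; and the Lagrange-multiplier computation at the end is identical (including your correct observation that the bilinear form relevant to~\eqref{eq:Problem} carries a minus sign on the fractional part). The one slip is in the passage from the Brezis--Lieb identity to squared norms: you assert $1=\|u_0\|_{2^*}^2+\|v_k\|_{2^*}^2+o(1)$ and state that the inequality $\|u_0\|_{2^*}^2+\|v_k\|_{2^*}^2\le\bigl(\|u_0\|_{2^*}^{2^*}+\|v_k\|_{2^*}^{2^*}\bigr)^{2/2^*}+o(1)$ is what is needed, but this direction is false --- subadditivity of $t\mapsto t^{2/2^*}$ (which you yourself quote correctly a few words earlier as $a^{2/2^*}+b^{2/2^*}\ge(a+b)^{2/2^*}$) gives only $\|u_0\|_{2^*}^2+\|v_k\|_{2^*}^2\ge 1+o(1)$, not equality, and indeed the sum can converge to a limit strictly above $1$. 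Fortunately the $\ge$ direction, combined with $S(\gamma)>0$, is exactly what the argument requires: it yields $S(\gamma)\bigl(\|u_0\|_{2^*}^2+\|v_k\|_{2^*}^2\bigr)\ge S(\gamma)+o(1)$ and the rest of the chain closes as you wrote it, so the repair is immediate. A minor stylistic difference from the paper: where the paper first establishes $u\not\equiv0$ as a standalone step and then normalizes $u_0=u/\|u\|_{2^*}$, you instead show $\|v_k\|_{2^*}\to0$ directly, which gives $\|u_0\|_{2^*}=1$ and hence nontriviality as a by-product; both are fine.
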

	
	\begin{proof}
		The argument and the notation that we will adopt is heavily influenced by the original one used by Brezis and Nirenberg \cite{BrNir}. In particular, since we will consider a minimizing sequence denoted by~$u_j$, we also denote by~$o(1)$ quantities that tend to zero as~$j\to+\infty$.
	To ease the readability, we split the proof into two steps.
	\medskip
	
	\textsc{Step I):} We first prove that, if $S(\gamma) < S_n$, then the infimum in \eqref{eq:Def_Sgamma} is achieved.
For this, let~$u_j$ be a minimizing sequence of functions in $\mathcal{X}^{1,2}(\Omega)$. In particular, it holds that $\|u_{j}\|_{L^{2^{\ast}}(\Omega)}=1$ for every $j\in \mathbb{N}$. 
In this way, we have that
\begin{equation}\label{vbncxm38945y43tugejk}
\lim_{j\to+\infty}Q_{\gamma}(u_j)=
S(\gamma).\end{equation}

Moreover, since $\gamma \in (0,C_{emb})$, it follows that 
		\begin{equation}\label{eq:Stima_Limitatezza}
			\left(1- \dfrac{\gamma}{C_{emb}}\right) \int_{\Omega}|\nabla u_j|^2 \, dx \leq Q_{\gamma}(u_j) .
		\end{equation}
From this and~\eqref{vbncxm38945y43tugejk},
we deduce that~$u_j$ is a bounded sequence in the Hilbert space~$\mathcal{X}^{1,2}(\Omega)$. Therefore, up to subsequences and using Remark~\ref{rem:PropX12Usare}-ii), we infer the existence of $u \in \mathcal{X}^{1,2}(\Omega)$ such that
		\begin{itemize}
			\item $u_j \to u$ weakly in $\mathcal{X}^{1,2}(\Omega)$;
			\item $[u_j - u]_s \to 0$;
			\item $u_j \to u$ a.e. in $\Omega$,
		\end{itemize}
		 as $j \to +\infty$. We stress that the latter implies that $\|u\|_{L^{2^{\ast}}(\Omega)}\leq 1$.
		
		Now, since $\|u_{j}\|_{L^{2^{\ast}}(\Omega)}=1$ for every $j\in \mathbb{N}$, it follows that
		\begin{equation*}
			\int_{\Omega}|\nabla u_j|^{2}\, dx \geq S_n,\quad \textrm{ for every } j \in \mathbb{N},
		\end{equation*}
		 and hence, recalling \eqref{vbncxm38945y43tugejk}, we get
		\begin{equation*}
			\begin{aligned}
			&\gamma \, [u]^{2}_{s}  = \gamma \, \lim_{j \to +\infty}[u_j]^{2}_{s} = \limsup_{j\to +\infty} \left(\int_{\Omega}|\nabla u_j|^{2} \, dx - Q_{\gamma}(u_j)\right)\\
			&\qquad
			\geq  \limsup_{j\to +\infty} \left(S_n - Q_{\gamma}(u_j)\right)	
			=S_n - S(\gamma) >0. 
		\end{aligned}
		\end{equation*}	
		This shows that $u \not\equiv 0$.
		
		Set now $v_j := u_j -u$ and notice that, as~$j\to+\infty$, 
		\begin{equation}\label{eq:Conto1}
			\begin{split}&
				\|u\|^2_{H^{1}_{0}(\Omega)} + \|v_j\|^2_{H^{1}_{0}(\Omega)} - \gamma [u]^2_{s}\\
				&\quad= \|u\|^2_{H^{1}_{0}(\Omega)} + \|u_j\|^2_{H^{1}_{0}(\Omega)} + \|u\|^2_{H^{1}_{0}(\Omega)} 
				- 2 \int_{\Omega}\nabla u \cdot \nabla u_j\, dx - \gamma [u]^2_{s}\\
				&\quad= 2\|u\|^2_{H^{1}_{0}(\Omega)} + Q_{\gamma}(u_j) + \gamma \big([u_j]^2_s - [u]^2_{s}\big) - 2\left(\|u\|^2_{H^{1}_{0}(\Omega)} + o(1)\right) \\
				&\quad= S(\gamma) + o(1) .
			\end{split}
		\end{equation}
		Moreover, since $v_j \to 0$ weakly in $\mathcal{X}^{1,2}(\Omega)$ (hence, $v_j$ is bounded in $\mathcal{X}^{1,2}(\Omega)$), it follows that $v_j$ is bounded in $L^{2^{\ast}}(\Omega)$ as well. This allows to use the Brezis-Lieb Lemma in~\cite{BrezisLieb} to get that,
		as~$j\to+\infty$,
		\begin{equation*}
				1 = \|u_j\|^{2^{\ast}}_{L^{2^{\ast}}(\Omega)} = \|u+ v_j\|^{2^{\ast}}_{L^{2^{\ast}}(\Omega)} 
= \|u\|^{2^{\ast}}_{L^{2^{\ast}}(\Omega)} + \|v_j\|^{2^{\ast}}_{L^{2^{\ast}}(\Omega)} + o(1) .
		\end{equation*}
		This yields, as~$j\to+\infty$,
		\begin{equation*}\label{eq:Stima_per_usare_BN}
		\begin{split}
			 1 & = \Big(\|u\|^{2^{\ast}}_{L^{2^{\ast}}(\Omega)} + \|v_j\|^{2^{\ast}}_{L^{2^{\ast}}(\Omega)} + o(1)\Big)^{2/2^*} \\
			 & \leq \|u\|^{2}_{L^{2^{\ast}}(\Omega)} + \|v_j\|^{2}_{L^{2^{\ast}}(\Omega)} + o(1)  
			 \\
			& \leq \|u\|^{2}_{L^{2^{\ast}}(\Omega)} + \dfrac{1}{S_n} \, \int_{\Omega}|\nabla v_j|^{2} \, dx + o(1) .
			 \end{split}
		\end{equation*}
		
		Recalling that $S(\gamma)>0$, we find that, 
		\begin{equation*}
			S(\gamma) \leq S(\gamma)\,  \|u\|^{2}_{L^{2^{\ast}}(\Omega)} + \dfrac{S(\gamma)}{S_n} \, \int_{\Omega}|\nabla v_j|^{2} \, dx + o(1) \quad \textrm { as } j \to +\infty,
		\end{equation*}
		 and thus, by using \eqref{eq:Conto1}, 
		\begin{equation*}
				\|u\|^2_{H^{1}_{0}(\Omega)} + \|v_j\|^2_{H^{1}_{0}(\Omega)} - \gamma [u]^{2}_{s} 
				\leq S(\gamma)\,  \|u\|^{2}_{L^{2^{\ast}}(\Omega)} + \dfrac{S(\gamma)}{S_n} \, \int_{\Omega}|\nabla v_j|^{2} \, dx + o(1) .
		\end{equation*}	
		From this, and exploiting that $S(\gamma)<S_n$, we get
		that, as~$j\to+\infty$,
		\begin{equation}\label{eq:Conto4}
				\|u\|^2_{H^{1}_{0}(\Omega)} - \gamma [u]^{2}_{s} \leq S(\gamma)  \|u\|^{2}_{L^{2^{\ast}}(\Omega)} + o(1) .
		\end{equation}

Now, since $u \not\equiv 0$, we can define
$$ u_0 := \frac{u}{\|u\|_{L^{2^*}(\Omega)}}$$
and, plugging it into ~\eqref{eq:Conto4} and taking the limit as $j\to +\infty$, we obtain 
$$ S(\gamma)\le Q_{\gamma}(u_0)\le S(\gamma).$$
This says, that~$u_0$ achieves the infimum in \eqref{eq:Def_Sgamma}.
		\medskip
		
		\textsc{Step II):} We now turn to show that, if $u_0$ achieves the infimum in \eqref{eq:Def_Sgamma}, then there exists a nontrivial
		solution of
		problem \eqref{eq:Problem}, given by \eqref{eq:tildeuSolutiondaMinimo}.
		
		To this end, we first observe that, if $u_0$ achieves the infimum in \eqref{eq:Def_Sgamma},
		by
		applying the Lagrange Multiplier Theorem we can infer the existence of $\mu \in \mathbb{R}$ such that,
for every~$\psi \in \mathcal{X}^{1,2}(\Omega)$,
		\begin{equation}\label{eq:Using_Lagrange}
			\begin{aligned}
 				 \int_{\Omega}\nabla u_0 \cdot \nabla \psi \, dx &- \gamma \iint_{\mathbb{R}^{2n}}\dfrac{(u_0(x)-u_0(y))(\psi(x)-\psi(y))}{|x-y|^{n+2s}}\, dx\, dy \\
 				&= \mu \int_{\Omega}|u_{0}|^{2^{\ast}-2}u_0\psi \, dx.
			\end{aligned}	
		\end{equation}
		In particular, we can choose $\psi:=u_0$ in \eqref{eq:Using_Lagrange} and this yields
		\begin{equation*}
			Q_{\gamma}(u_0) = \mu \int_{\Omega}|u_{0}|^{2^{\ast}}\, dx = \mu,
		\end{equation*}
		 which, recalling that $Q_{\gamma}(u_0) = S(\gamma)$, shows that
		$$\mu = S(\gamma).$$ 
		
		Setting now $\widetilde{u}:= k u_0$, with 
		$k:= S(\gamma)^{1/(2^{\ast}-2)}$, we get
		\begin{align*}
			\int_{\Omega}\nabla \widetilde{u} \cdot \nabla \psi \, dx &- \gamma \iint_{\mathbb{R}^{2n}}\dfrac{(\widetilde{u}(x)-\widetilde{u}(y))(\psi(x)-\psi(y))}{|x-y|^{n+2s}}\, dx\, dy \\
			& = S(\gamma)^{1/(2^*-2)+1}\int_{\Omega}|u_{0}|^{2^{\ast}-2}u_0\psi \, dx \\
			& = \int_{\Omega}|\widetilde{u}|^{2^{\ast}-2}\widetilde{u}\psi \, dx, 
		\end{align*}
for every $\psi\in \mathcal{X}^{1,2}(\Omega)$,
		 and this shows that $\widetilde{u}$ solves \eqref{eq:Problem}.
\end{proof}

In view of Proposition~\ref{prop:Minimum_Achieved}, the path towards establishing the existence of a weak solution to problem \eqref{eq:Problem} is laid out: indeed, it remains to show that 
\begin{equation} \label{eq:SgammaleqSntoprove} 
S(\gamma) < S_n.
\end{equation}
To prove this, we need to distinguish between two cases,
the high-dimensional case~$n\ge5$ and the low-dimensional case~$n=3,4$, which we now deal with separately.
\medskip

\noindent \textbf{1)\,\,The high-dimensional case $n\geq 5$.} In this first case,
we are able to prove that \eqref{eq:SgammaleqSntoprove} actually holds
\emph{for every $\gamma\in(0, C_{emb})$} by using an \emph{ad-hoc} competitor function
in \eqref{eq:Def_Sgamma}. To begin with, we have the following technical results.

\begin{lemma}\label{lem:andamenti}
  	Assume that $0 \in \Omega$. Let $\delta >0$ be
  	such that $B_{4\delta}\Subset \Omega$. Let~$\varphi \in C^{\infty}_{0}(\mathbb{R}^n,[0,1])$ be such that~$\varphi \equiv 1$ in~$B_{\delta}$ and~$\varphi \equiv 0$ in~$B^c_{2\delta}$.
  	
  	For every $\varepsilon>0$, let
  	\begin{equation}\label{eq:def_Ueps}
  		U_{\varepsilon}(x):= \varphi(x) \, V_{\varepsilon}(x), \quad {\mbox{for all }} x \in \mathbb{R}^{n},
  	\end{equation}
  	 where $V_{\varepsilon}$ is as in \eqref{eq:def_Talentiane} with $y=0$.
  	
  	Then, the following holds:
  	\begin{itemize}
  		\item[(i)] $\dfrac{\|\nabla V_{\varepsilon}\|^2_{L^2(\mathbb{R}^n)}}{\|V_{\varepsilon}\|^2_{L^{2^{\ast}}(\mathbb{R}^n)}} = S_n$;
  		\item[(ii)] $\|\nabla U_{\varepsilon}\|^2_{L^{2}(\Omega)} = K_1 + O(\varepsilon^{n-2}), \textit{as } \varepsilon \to 0^+$;
  		\item[(iii)] $\|U_{\varepsilon}\|^{2^{\ast}}_{L^{2^{\ast}}(\Omega)} = K_2 + O(\varepsilon^{n}), \textit{as } \varepsilon \to 0^+$,
  		where $\dfrac{K_1}{K_2^{1-2/n}}=S_n;$
  		\item[(iv)] $[U_{\varepsilon}]_{s}^{2} = O(\varepsilon^{2-2s}), \textit{as } \varepsilon \to 0^+$.
  	\end{itemize}   \end{lemma}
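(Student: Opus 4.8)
\textbf{Proof plan for Lemma~\ref{lem:andamenti}.}
The plan is to treat the four items separately, relying throughout on the explicit form of the Aubin--Talenti functions and the scaling~$V_\varepsilon(x)=\varepsilon^{-(n-2)/2}V_1(x/\varepsilon)$. Item~(i) is the classical fact that $V_\varepsilon$ is an extremal for the Sobolev inequality, so that the ratio equals $S_n$ for every~$\varepsilon>0$; this is scale invariant and requires no work beyond quoting the Talenti/Aubin computation (or the companion identities $\|\nabla V_\varepsilon\|_{L^2(\R^n)}^2=\|V_\varepsilon\|_{L^{2^*}(\R^n)}^{2^*}=S_n^{n/2}$, which also pin down $K_1,K_2$ below).

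For items~(ii) and~(iii) I would follow the standard Brezis--Nirenberg localization estimates. Writing $U_\varepsilon=\varphi V_\varepsilon$, expand $\nabla U_\varepsilon=\varphi\nabla V_\varepsilon+V_\varepsilon\nabla\varphi$ and integrate over $\R^n$; the cross term and the $|\nabla\varphi|^2 V_\varepsilon^2$ term are supported in the annulus $B_{2\delta}\setminus B_\delta$, where $V_\varepsilon$ and $|\nabla V_\varepsilon|$ are bounded by $C\varepsilon^{(n-2)/2}$ and $C\varepsilon^{(n-2)/2}$ respectively (since $|x|\ge\delta$ there), giving contributions $O(\varepsilon^{n-2})$. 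The main term $\int_{\R^n}\varphi^2|\nabla V_\varepsilon|^2$ differs from $\int_{\R^n}|\nabla V_\varepsilon|^2=S_n^{n/2}=:K_1$ by $\int_{B_\delta^c}(\varphi^2-1)|\nabla V_\varepsilon|^2$, which is again $O(\varepsilon^{n-2})$ by the same decay estimate and the fact that $\int_{B_\delta^c}|\nabla V_\varepsilon|^2=O(\varepsilon^{n-2})$; this proves~(ii). Item~(iii) is analogous but easier: $\|U_\varepsilon\|_{L^{2^*}(\Omega)}^{2^*}=\int_{\R^n}\varphi^{2^*}V_\varepsilon^{2^*}$ differs from $\int_{\R^n}V_\varepsilon^{2^*}=:K_2$ by $\int_{B_\delta^c}(\varphi^{2^*}-1)V_\varepsilon^{2^*}$, and $\int_{B_\delta^c}V_\varepsilon^{2^*}=O(\varepsilon^{n})$ by the change of variables $x=\varepsilon z$ and the integrability of $V_1^{2^*}$ at infinity. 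The identity $K_1/K_2^{1-2/n}=S_n$ then follows from~(i) together with the explicit values $K_1=K_2=S_n^{n/2}$.

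Item~(iv) is the genuinely new point, and the one I expect to be the main obstacle, since it is precisely the step where the lack of scaling invariance enters. I would estimate $[U_\varepsilon]_s^2$ by splitting the double integral according to whether the points lie inside or outside the support of $\varphi$. First, $[U_\varepsilon]_s^2\le C\,[U_\varepsilon]_{H^s(\R^n)}^2$ and, using the fractional Gagliardo--Nirenberg/interpolation inequality $[w]_s^2\le C\|w\|_{L^2}^{2(1-s)}\|\nabla w\|_{L^2}^{2s}$ valid for $w\in H^1$ with compact support, one gets $[U_\varepsilon]_s^2\le C\|U_\varepsilon\|_{L^2(\R^n)}^{2(1-s)}\|\nabla U_\varepsilon\|_{L^2(\R^n)}^{2s}$. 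By~(ii), $\|\nabla U_\varepsilon\|_{L^2}^{2s}=O(1)$, while $\|U_\varepsilon\|_{L^2(\R^n)}^2\le\int_{B_{2\delta}}V_\varepsilon^2\,dx$; for $n\ge5$ this integral converges to a finite constant, but the interpolation bound then only gives $O(1)$, which is too weak. Instead I would compute $\|U_\varepsilon\|_{L^2}^2$ more carefully: changing variables $x=\varepsilon z$ gives $\int_{B_{2\delta}}V_\varepsilon^2\,dx=\varepsilon^2\int_{B_{2\delta/\varepsilon}}V_1(z)^2\,dz$, and since $V_1(z)^2\sim|z|^{-2(n-2)}$ is integrable at infinity precisely when $n>4$, one obtains $\|U_\varepsilon\|_{L^2}^2=O(\varepsilon^2)$ for $n\ge5$ (with logarithmic or worse corrections for $n=3,4$ that still yield a power $\varepsilon^{2-\eta}$). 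Feeding this into the interpolation inequality gives $[U_\varepsilon]_s^2\le C(\varepsilon^2)^{1-s}\cdot O(1)=O(\varepsilon^{2-2s})$, as claimed. Alternatively, and perhaps cleaner, one may estimate directly: the region where both $x,y$ lie away from the origin contributes $O(\varepsilon^{n-2s})$ by pointwise decay of $U_\varepsilon$, while the near-origin region is handled by bounding $[U_\varepsilon]_s^2$ against $\varepsilon^{2-2s}[V_1]_{H^s(\ldots)}^2$ after the scaling $x=\varepsilon z$, the seminorm of the rescaled bump being uniformly bounded. The delicate bookkeeping is making sure the cross-region (one point near $0$, one far) also contributes at most $O(\varepsilon^{2-2s})$, which follows from the uniform-in-$\varepsilon$ tail bounds on $U_\varepsilon$ together with $n+2s>2$.
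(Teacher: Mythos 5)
Your treatment of items (i)--(iii) is correct and follows the same route as the paper, which simply cites Talenti and Brezis--Nirenberg; those are the standard localization estimates and there is nothing to add there.

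For item (iv) you take a genuinely different path. The paper's proof is essentially "cite~\cite[Lemma~4.10]{BDVV5}", and the subsequent Proposition~\ref{prop:Stima_Migliore} (which refines~(iv) for $n\ge 5$) is a scaling-plus-tail-decay argument in the spirit of Servadei--Valdinoci. Your interpolation route $[w]_s^2\le C\|w\|_{L^2}^{2(1-s)}\|\nabla w\|_{L^2}^{2s}$ combined with $\|U_\varepsilon\|_{L^2(\R^n)}^2=O(\varepsilon^2)$ is a clean and elementary alternative for $n\ge 5$, and this is exactly the regime the paper actually needs (Lemma~\ref{lem:andamenti}(iv) is superseded by Proposition~\ref{prop:Stima_Migliore} precisely when $n\ge5$, and the low-dimensional case of Theorem~\ref{thm:Main} uses the continuity argument instead).

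Two issues, though, both concerning $n<5$ and your ``alternative'' estimate. First, for $n=3,4$ the interpolation does \emph{not} recover the stated bound: one gets $\|U_\varepsilon\|_{L^2}^{2}=O(\varepsilon^{2}\lvert\log\varepsilon\rvert)$ for $n=4$ and $O(\varepsilon)$ for $n=3$, yielding $[U_\varepsilon]_s^2=O(\varepsilon^{2-2s}\lvert\log\varepsilon\rvert^{1-s})$ and $O(\varepsilon^{1-s})$ respectively, neither of which is $O(\varepsilon^{2-2s})$; your parenthetical ``still yields a power $\varepsilon^{2-\eta}$'' glosses over this. In fact, for $n=3$ and $s<1/2$ the stated estimate is simply false: the contribution from $x,y\in B_\delta^c$ is of genuine order $\varepsilon^{n-2}=\varepsilon$ (since on the annulus $|U_\varepsilon|\sim\varepsilon^{(n-2)/2}$), which dominates $\varepsilon^{2-2s}$ when $2-2s>1$; equivalently, $[V_1]_s=\infty$ there, so the rescaled bump does not have uniformly bounded seminorm. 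Second, in your direct estimate the away-from-origin region contributes $O(\varepsilon^{n-2})$, not $O(\varepsilon^{n-2s})$ as you write: pointwise decay gives $|U_\varepsilon|,|\nabla U_\varepsilon|\lesssim\varepsilon^{(n-2)/2}$ on $B_\delta^c$, and squaring yields $\varepsilon^{n-2}$, with no $s$ in the exponent. For $n\ge 5$ both bounds are $o(\varepsilon^{2-2s})$ so the conclusion stands, but the exponent as stated is incorrect.
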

  	
  	\begin{proof}
  		The validity of (i) follows from the classical result by Talenti~\cite{Talenti2}. For (ii) and (iii) we refer to~\cite[Lemma~1.1]{BrNir}, while (iv) is contained in the proof of~\cite[Lemma~4.10]{BDVV5}.
  	\end{proof}
  
    We stress that the assumption $n\geq 5$ is actually not needed in the previous Lemma~\ref{lem:andamenti}: indeed, the typical term which exhibits different behaviours according to the dimension $n$ is $\|U_{\varepsilon}\|_{L^2(\R^n)}$. The restriction on $n$ becomes important in the upcoming estimate.
  
  \begin{lemma}\label{lem:crucial}
  	Let $n \geq 5$, $C_0>0$ and $v \in C^{1}(\mathbb{R}^{n})$ be such that
  	\begin{equation*}
  		|v(x)| + |x| |\nabla v(x)| \leq C_0 \, \min \left\{ 1, \dfrac{1}{|x|^{n-2}}\right\},  \quad \textrm{ for every } x \in \mathbb{R}^{n}\setminus \{0\}.
  	\end{equation*}
  	
  	Then, there exists~$C=C(n,C_0,s)>0$ such that 
  	\begin{equation}\label{eq:stima_cruciale}
  		\iint_{\mathbb{R}^{n} \times B^{c}_{R}}\dfrac{|v(x)-v(y)|^2}{|x-y|^{n+2s}}\, dx\,dy \leq \dfrac{C}{R^{2s}} \quad \textrm{ for all } R \geq 1.
  	\end{equation}
  	\begin{proof}
  		 Let~$\rho\in\left(0,\frac{R}2\right]$. We remark that,
if~$y\in B_R^c$ and~$z\in B_\rho(y)$,
  	$$	 |z|\ge |y|-|z-y|>|y|-\rho\ge|y|- \frac{R}2>\frac{|y|}2,$$
  		and therefore, if~$x\in B_\rho(y)$, we have that
  		\begin{eqnarray*}&&
  			|v(x)-v(y)|\le\sup_{z\in B_\rho(y)}|\nabla v(z)|\,|x-y|\le
  			\sup_{z\in B_\rho(y)}\frac{C_0\,|x-y|}{|z|^{n-1}}\le
  		\frac{C\,|x-y|}{|y|^{n-1}},
  		\end{eqnarray*}where we freely rename~$C>0$ from line to line.
 	
  		Consequently,
  		\begin{equation*}\begin{split}
  				&\!\!\!\!\!\!\!\!\iint_{\R^n\times B_R^c}\frac{|v(x)-v(y)|^2}{|x-y|^{n+2s}}\,dx\,dy\\
  				&\le\iint_{{\R^n\times B_R^c}\atop{\{|x-y|\le\rho\}}}\frac{|v(x)-v(y)|^2}{|x-y|^{n+2s}}\,dx\,dy+
  				2\iint_{{\R^n\times B_R^c}\atop{\{|x-y|>\rho\}}}\frac{|v(x)|^2+|v(y)|^2}{|x-y|^{n+2s}}\,dx\,dy
  				\\&\le
  				C\iint_{{\R^n\times B_R^c}\atop{\{|x-y|\le\rho\}}}\frac{|x-y|^{2-n-2s}}{|y|^{2n-2}}\,dx\,dy\\
  				& \qquad +
  				C\iint_{{\R^n\times B_R^c}\atop{\{|x-y|>\rho\}}}
  				\min\left\{1,\frac{1}{|x|^{2n-4}}\right\}
  				\frac{dx\,dy}{|x-y|^{n+2s}}\\&\qquad+C\iint_{{\R^n\times B_R^c}\atop{\{|x-y|>\rho\}}}
  				\frac{dx\,dy}{|y|^{2n-4}|x-y|^{n+2s}}
  				\\&\le \frac{C\rho^{2-2s}}{R^{n-2}}+\frac{C}{\rho^{2s}}\int_{\R^n}
  				\min\left\{1,\frac{1}{|x|^{2n-4}}\right\}\,dx+\frac{C}{R^{n-4}\rho^{2s}}\\&\le
  				\frac{C}{\rho^{2s}}\left( 
  				\frac{\rho^{2}}{R^{n-2}}+1
  				+\frac{1}{R^{n-4}}
  				\right).
  		\end{split}\end{equation*}
  		
  		In particular, choosing~$\rho:=\frac{R}2$, the left-hand side of~\eqref{eq:stima_cruciale} is controlled by
  		$$\frac{C}{R^{2s}}\left( 
  		\frac{1}{R^{n-4}}+1
  		\right)$$
  		and the desired result follows.
  	\end{proof}
  \end{lemma}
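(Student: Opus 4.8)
The plan is to split the double integral according to the size of $|x-y|$: fix a threshold $\rho\in(0,R/2]$ (whose value $\rho=R/2$ I will only pick at the very end) and write the integral as a contribution from the ``diagonal region'' $\{|x-y|\le\rho\}$ plus a contribution from $\{|x-y|>\rho\}$.

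For the diagonal region, the crucial elementary observation is that if $y\in B_R^c$ and $|x-y|\le\rho\le R/2$, then $|x|\ge|y|-\rho\ge|y|/2>R/2$, so the whole segment joining $x$ and $y$ stays at distance $\gtrsim|y|$ from the origin; combining the mean value inequality with the hypothesis, which in particular yields $|\nabla v(z)|\le C_0|z|^{1-n}$ for $|z|\ge1$, I get $|v(x)-v(y)|\le C|y|^{1-n}|x-y|$. Hence the integrand over this region is bounded by $C|y|^{2-2n}|x-y|^{2-n-2s}$; integrating first in $x$ over $\{|x-y|\le\rho\}$ gives $C\rho^{2-2s}$ (the radial integral $\int_0^\rho r^{1-2s}\,dr$ being convergent since $s<1$), and then integrating $|y|^{2-2n}$ over $B_R^c$ gives $CR^{2-n}$ (convergent since $n>2$). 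Thus this region contributes at most $C\rho^{2-2s}R^{2-n}$.

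For the region $\{|x-y|>\rho\}$ I bound crudely $|v(x)-v(y)|^2\le2|v(x)|^2+2|v(y)|^2$ and split into two pieces. In both, the integral of $|x-y|^{-n-2s}$ over $\{|x-y|>\rho\}$ (in $y$ with $x$ fixed, or in $x$ with $y$ fixed) equals $C\rho^{-2s}$. What remains is, for the first piece, $\int_{\mathbb{R}^n}|v(x)|^2\,dx\le C_0^2\int_{\mathbb{R}^n}\min\{1,|x|^{4-2n}\}\,dx$, and for the second piece, $\int_{B_R^c}|v(y)|^2\,dy\le C_0^2\int_{B_R^c}|y|^{4-2n}\,dy\le CR^{4-n}$. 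Both integrals are finite precisely because $n\ge5$ (the decay $|x|^{4-2n}$ is integrable at infinity exactly when $2n-4>n$), and this is the only place the dimensional restriction is genuinely used. So these two pieces contribute at most $C\rho^{-2s}$ and $C\rho^{-2s}R^{4-n}$, respectively.

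Putting the three contributions together gives a bound of the form $C\rho^{-2s}\bigl(\rho^2R^{2-n}+1+R^{4-n}\bigr)$; taking $\rho=R/2$ turns this into $CR^{-2s}\bigl(R^{4-n}+1+R^{4-n}\bigr)\le C'R^{-2s}$ for $R\ge10$ and $n\ge5$ (so that $R^{4-n}\le1$), which is the desired estimate. I expect the only genuinely delicate point to be the bookkeeping of the various radial exponents and checking that each one-dimensional integral converges: the near-diagonal $x$-integrals need $s<1$, the $y$-integral over $B_R^c$ in the diagonal region needs $n>2$, and the global $L^2$-bound on $v$ together with its decaying tail bound genuinely needs $n\ge5$; everything else is routine.
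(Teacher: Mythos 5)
Your proof is correct and follows essentially the same route as the paper's: the same $\rho$-threshold splitting, the same mean-value estimate on $B_\rho(y)$ giving $|v(x)-v(y)|\le C|y|^{1-n}|x-y|$ near the diagonal, the same crude bound $|v(x)-v(y)|^2\le 2|v(x)|^2+2|v(y)|^2$ far from it, and the same final choice $\rho=R/2$. Your bookkeeping of the radial exponents and identification of exactly where $s<1$, $n>2$, and $n\ge5$ enter matches the paper's computation.
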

  
  \begin{proposition}\label{prop:Stima_Migliore}
  	Let $n\geq 5$ and let $U_{\varepsilon}$ be as in \eqref{eq:def_Ueps}. Then,
  	\begin{equation*}
  		[U_\varepsilon]^2_s = \varepsilon^{2-2s}\, [V_1]^{2}_{s} + O(\varepsilon^{2}),\quad \textrm{ as } \varepsilon \to 0^{+}.
  	\end{equation*}
  	\begin{proof} The proof follows the lines of
that of~\cite[Lemma~4.10]{BDVV5}.
Recall that~$\delta >0$ is such that~$B_{4\delta} \subset \Omega$ and define the sets 
  		\begin{align*}
  			& \mathbb{D}:= \left\{(x,y)\in \mathbb{R}^{2n}: x \in B_{\delta}, y\in B^c_{\delta} \, 
  			\textrm{ and } |x-y|> \delta/2 \right\}, \\
  			& \mathbb{E}:= \left\{(x,y)\in \mathbb{R}^{2n}: x \in B_{\delta}, y\in B^c_{\delta} \,
  			\textrm{ and } |x-y|\leq \delta/2 \right\}.
  		\end{align*}
  		With this notation, we get that
  		\begin{equation*}
  			\begin{aligned}
  				\iint_{\mathbb{R}^{2n}}\dfrac{|U_{\varepsilon}(x)-U_{\varepsilon}(y)|^2}{|x-y|^{n+2s}}\, dx\, dy &= \iint_{B_{\delta}\times B_{\delta}}\dfrac{|V_{\varepsilon}(x)-V_{\varepsilon}(y)|^2}{|x-y|^{n+2s}}\, dx\, dy \\
  				&+ 2  \iint_{\mathbb{D}}\dfrac{|U_{\varepsilon}(x)-U_{\varepsilon}(y)|^2}{|x-y|^{n+2s}}\, dx\, dy\\
  				&+ 2  \iint_{\mathbb{E}}\dfrac{|U_{\varepsilon}(x)-U_{\varepsilon}(y)|^2}{|x-y|^{n+2s}}\, dx\, dy\\
  				&+  \iint_{B^c_{\delta}\times B^c_{\delta}}\dfrac{|U_{\varepsilon}(x)-U_{\varepsilon}(y)|^2}{|x-y|^{n+2s}}\, dx\, dy.
  			\end{aligned}
  		\end{equation*}
  		Following the computations of~\cite[Proposition~21]{SeVa}, one can infer that 
  		the last three integrals of the above identity behave like $O(\varepsilon^{n-2})$ as $\varepsilon \to 0^{+}$. See also~\cite{daSilva} for similar computations with $p\neq 2$.
  		
 Let us now improve the estimate performed in~\cite{BDVV5} for the first integral (recall Lemma~\ref{lem:andamenti}-(iv)).
We perform the change of variables~$x:= \varepsilon \xi$ and~$y:=\varepsilon \eta$ to see that
\begin{equation}\label{bvncmxs328574839ty}
\iint_{B_{\delta}\times B_{\delta}}\dfrac{|V_{\varepsilon}(x)-V_{\varepsilon}(y)|^2}{|x-y|^{n+2s}}\, dx\, dy=
\iint_{B_{\delta/\varepsilon}\times B_{\delta/\varepsilon}}\dfrac{|V_{\varepsilon}(\varepsilon \xi)-V_{\varepsilon}( \varepsilon \eta)|^2}{| \varepsilon \xi- \varepsilon \eta|^{n+2s}}\,\varepsilon^{2n}\, d\xi\, d\eta	.
\end{equation}
We also observe that, for all~$z\in\R^n$,
$$ V_{\varepsilon}(\varepsilon z)
=\dfrac{\varepsilon^{(n-2)/2}}{(\varepsilon^{2} + |\varepsilon z|^2)^{(n-2)/2}}
=\dfrac{\varepsilon^{-(n-2)/2}}{(1+ |z|^2)^{(n-2)/2}} 
=\varepsilon^{-(n-2)/2}V_1(z).
$$
Plugging this information into~\eqref{bvncmxs328574839ty}, we infer that
\begin{equation}\label{vbcnx35y7777ghghkwhgu5i685i}
\iint_{B_{\delta}\times B_{\delta}}\dfrac{|V_{\varepsilon}(x)-V_{\varepsilon}(y)|^2}{|x-y|^{n+2s}}\, dx\, dy=\varepsilon^{2-2s}
\iint_{B_{\delta/\varepsilon}\times B_{\delta/\varepsilon}}\dfrac{|V_{1}(\xi)-V_{1}( \eta)|^2}{| \xi- \eta|^{n+2s}}\, d\xi\, d\eta	.
\end{equation}

Moreover, we notice that
\begin{eqnarray*}
\iint_{\mathbb{R}^{2n} \setminus \left(B_{\delta/\varepsilon} \times B_{\delta/\varepsilon}\right)} \dfrac{|V_1(\xi)-V_1(\eta)|^2}{|\xi -\eta|^{n+2s}}\, d\xi \,d\eta
\leq 2\iint_{\mathbb{R}^{n} \times B^c_{\delta/\varepsilon}} \dfrac{|V_1(\xi)-V_1(\eta)|^2}{|\xi -\eta|^{n+2s}}\, d\xi \,d\eta.
\end{eqnarray*}
Now, since~$V_1$ satisfies the assumptions of Lemma~\ref{lem:crucial}, choosing $R := \delta/\varepsilon$, it follows that there exists~$\widetilde{C}>0$ (depending on $\delta$ but independent of $\varepsilon$) such that
  		\begin{equation*}
  \iint_{\mathbb{R}^{2n} \setminus \left(B_{\delta/\varepsilon} \times B_{\delta/\varepsilon}\right)} \dfrac{|V_1(\xi)-V_1(\eta)|^2}{|\xi -\eta|^{n+2s}}\, d\xi \,d\eta\leq \widetilde{C} \varepsilon^{2s}.
  		\end{equation*}
  		
{F}rom this and~\eqref{vbcnx35y7777ghghkwhgu5i685i}, we conclude that	
\begin{eqnarray*}
&&\iint_{B_{\delta}\times B_{\delta}}\dfrac{|V_{\varepsilon}(x)-V_{\varepsilon}(y)|^2}{|x-y|^{n+2s}}\, dx\, dy\\&=&\varepsilon^{2-2s}
\left(\iint_{\mathbb{R}^{2n}} \dfrac{|V_1(\xi)-V_1(\eta)|^2}{|\xi -\eta|^{n+2s}}\, d\xi \,d\eta
-\iint_{\mathbb{R}^{2n} \setminus \left(B_{\delta/\varepsilon} \times B_{\delta/\varepsilon}\right)} \dfrac{|V_1(\xi)-V_1(\eta)|^2}{|\xi -\eta|^{n+2s}}\, d\xi \,d\eta\right)
\\&=&\varepsilon^{2-2s}[V_1]_s^2+O(\varepsilon^2).
\end{eqnarray*}
Combining all the estimates, we get the desired result.
\end{proof}
  \end{proposition}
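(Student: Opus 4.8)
The plan is to decompose the double integral defining $[U_\varepsilon]^2_s$ over $\mathbb{R}^{2n}$ into four regions according to the position of the two points relative to the ball $B_\delta$ on which $\varphi\equiv1$: the ``inner'' region $B_\delta\times B_\delta$, the two ``cross'' regions $\{x\in B_\delta,\ y\in B_\delta^c\}$ split according to whether $|x-y|\le\delta/2$ or $|x-y|>\delta/2$, and the ``outer'' region $B_\delta^c\times B_\delta^c$. On $B_\delta\times B_\delta$ we have $U_\varepsilon=V_\varepsilon$ identically, so this piece will produce the main term $\varepsilon^{2-2s}[V_1]^2_s$; on the other three pieces the cutoff is genuinely felt, and I would show that each contributes only $O(\varepsilon^{n-2})$, which for $n\ge5$ is absorbed into $O(\varepsilon^2)$.

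For the inner piece I would perform the scaling $x=\varepsilon\xi$, $y=\varepsilon\eta$ and use the homogeneity identity $V_\varepsilon(\varepsilon z)=\varepsilon^{-(n-2)/2}V_1(z)$; together with the factor $\varepsilon^{2n}$ from the change of variables and the $\varepsilon^{-(n+2s)}$ coming from $|\varepsilon\xi-\varepsilon\eta|^{n+2s}$, this turns $\iint_{B_\delta\times B_\delta}$ into $\varepsilon^{2-2s}\iint_{B_{\delta/\varepsilon}\times B_{\delta/\varepsilon}}$ of the Gagliardo kernel of $V_1$. Writing this as $\varepsilon^{2-2s}[V_1]^2_s$ minus $\varepsilon^{2-2s}$ times the integral over the complement of $B_{\delta/\varepsilon}\times B_{\delta/\varepsilon}$, everything reduces to a quantitative tail estimate for $[V_1]_s$. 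I would bound that complement by $2\iint_{\mathbb{R}^n\times B_{\delta/\varepsilon}^c}$ and invoke Lemma~\ref{lem:crucial} with $R:=\delta/\varepsilon$ — which applies since $V_1$ and $|x|\,|\nabla V_1|$ decay like $\min\{1,|x|^{-(n-2)}\}$, exactly its hypothesis — to conclude that the tail is $O((\varepsilon/\delta)^{2s})=O(\varepsilon^{2s})$. Hence the inner piece equals $\varepsilon^{2-2s}[V_1]^2_s+O(\varepsilon^2)$.

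For the remaining three pieces I would use the standard localization estimates for truncated Aubin--Talenti functions (as carried out in \cite[Proposition~21]{SeVa} and \cite[Lemma~4.10]{BDVV5}). In the ``close'' cross region one exploits $|U_\varepsilon(x)-U_\varepsilon(y)|\le\|\nabla U_\varepsilon\|_{L^\infty(B_{2\delta}\setminus B_{\delta/2})}\,|x-y|$, which on the relevant annulus is $O(\varepsilon^{(n-2)/2})\,|x-y|$, leaving an integrable singularity of order $|x-y|^{2-n-2s}$; in the ``far'' cross region and in the outer region the kernel is bounded away from the diagonal, so one only needs the pointwise bound $|U_\varepsilon(x)|\le C\,\varepsilon^{(n-2)/2}|x|^{-(n-2)}$ on $B_{2\delta}\setminus B_\delta$ together with $\mathrm{supp}\,U_\varepsilon\subset B_{2\delta}$. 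Each of these computations produces $O(\varepsilon^{n-2})$, and since $n\ge5$ gives $n-2\ge3>2$, all three are $O(\varepsilon^2)$. Summing the four contributions yields the claim.

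The main obstacle is precisely the sharp tail estimate for $[V_1]_s$: the argument of \cite{BDVV5} only produced the crude bound $O(\varepsilon^{2-2s})$ for the whole seminorm, whereas here one must know that the ``mass'' of the Gagliardo integral of $V_1$ outside $B_{\delta/\varepsilon}$ is quantitatively of order $\varepsilon^{2s}$. This is exactly what Lemma~\ref{lem:crucial} delivers, through the splitting into close and far pairs and the optimization $\rho=R/2$ in its proof; once that lemma is available, combining it with the scaling identity and the elementary localization estimates for the three remaining regions is routine.
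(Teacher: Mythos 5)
Your proposal is correct and follows essentially the same approach as the paper: the same four-region decomposition of $\mathbb{R}^{2n}$, the same scaling $x=\varepsilon\xi$, $y=\varepsilon\eta$ together with the homogeneity $V_\varepsilon(\varepsilon z)=\varepsilon^{-(n-2)/2}V_1(z)$ to pull out $\varepsilon^{2-2s}[V_1]^2_s$, and the same reduction of the error to the quantitative tail bound of Lemma~\ref{lem:crucial} with $R=\delta/\varepsilon$, the product $\varepsilon^{2-2s}\cdot\varepsilon^{2s}=\varepsilon^2$ giving the improved remainder while the three off-diagonal pieces are $O(\varepsilon^{n-2})=O(\varepsilon^2)$ for $n\ge5$. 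Your sketch of the localization estimates for the cross and outer regions is a reasonable expansion of what the paper handles by citation to \cite{SeVa}.
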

  
Thanks to the previous results, we can now prove that the family $U_\varepsilon$ decreases the energy of $Q_{\gamma}(\cdot)$ below the threshold $S_n$, that is,
\eqref{eq:SgammaleqSntoprove} holds.
\begin{proposition}\label{prop:Using_Ueps}
	Let $n\geq 5$, and let $\gamma\in (0, C_{emb})$. Then, there exists a function~$v \in \mathcal{X}^{1,2}(\Omega)$ such that $Q_{\gamma}(v)<S_{n}$.
	\begin{proof}
		Let $U_{\varepsilon}$ be as in \eqref{eq:def_Ueps}. By Lemma~\ref{lem:andamenti} and Proposition~\ref{prop:Stima_Migliore}, it follows that, as~$\varepsilon \to 0^{+}$,
		\begin{eqnarray*}
			&&Q_{\gamma}\left(\frac{U_{\varepsilon}}{\|U_{\varepsilon}\|_{L^{2^{\ast}}(\Omega)}}\right) =\frac1{
			\|U_{\varepsilon}\|^{2}_{L^{2^{\ast}}(\Omega)}}
			\left(\int_{\Omega}|\nabla U_{\varepsilon}|^{2} \, dx 
			- \gamma \, [U_{\varepsilon}]^{2}_{s} \right)\\&&\qquad
			=\frac1{K_2^{{2}/{2^{\ast}} } +O(\varepsilon^{n})}\left(
			K_1+O(\varepsilon^{n-2})
			-\gamma [V_1]^{2}_{s} \varepsilon^{2-2s} +O(\varepsilon^2)
			\right)\\&&\qquad
			=\left(\frac1{K_2^{{2}/{2^{\ast}} }} +O(\varepsilon^{n})\right)\left(
			K_1+O(\varepsilon^{n-2})
			-\gamma [V_1]^{2}_{s} \varepsilon^{2-2s} +O(\varepsilon^2)
			\right)	
			\\&&\qquad=
			S_{n} - \frac{\gamma [V_1]^{2}_{s}}{ K_2^{{2}/{2^{\ast}}}} \,\varepsilon^{2-2s} + o(\varepsilon^{2-2s}).
		\end{eqnarray*}
	This entails the desired result.	
	\end{proof}
\end{proposition}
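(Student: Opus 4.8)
The plan is to use the truncated Aubin--Talenti bubble $U_\varepsilon$ from \eqref{eq:def_Ueps} as an explicit competitor in \eqref{eq:Def_Sgamma}: setting $v:=U_\varepsilon/\|U_\varepsilon\|_{L^{2^*}(\Omega)}$ and using that $Q_\gamma$ is $2$-homogeneous, one has $Q_\gamma(v)=\big(\|\nabla U_\varepsilon\|_{L^2(\Omega)}^2-\gamma\,[U_\varepsilon]_s^2\big)\big/\|U_\varepsilon\|_{L^{2^*}(\Omega)}^2$, so the whole matter reduces to inserting the sharp asymptotics already at our disposal. First I would plug in Lemma~\ref{lem:andamenti}-(ii)--(iii), namely $\|\nabla U_\varepsilon\|_{L^2(\Omega)}^2=K_1+O(\varepsilon^{n-2})$ and $\|U_\varepsilon\|_{L^{2^*}(\Omega)}^{2^*}=K_2+O(\varepsilon^n)$ with $K_1/K_2^{1-2/n}=S_n$, and then --- this is where the earlier work is genuinely used --- the refined expansion of Proposition~\ref{prop:Stima_Migliore}, that is $[U_\varepsilon]_s^2=\varepsilon^{2-2s}\,[V_1]_s^2+O(\varepsilon^2)$, rather than the coarser $O(\varepsilon^{2-2s})$ of Lemma~\ref{lem:andamenti}-(iv).

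Next I would raise the $L^{2^*}$-norm to the power $2$, writing $\big(K_2+O(\varepsilon^n)\big)^{2/2^*}=K_2^{2/2^*}\big(1+O(\varepsilon^n)\big)$ and recalling that $2/2^*=1-2/n$, and then divide. The leading term of the resulting quotient is $K_1/K_2^{1-2/n}=S_n$, while the first correction is $-\gamma\,[V_1]_s^2\,K_2^{-2/2^*}\,\varepsilon^{2-2s}$, which is \emph{strictly negative} since $\gamma>0$ and $[V_1]_s^2\in(0,+\infty)$ (for $n\ge5$ one has $V_1\in L^2(\mathbb{R}^n)$, hence $V_1\in H^1(\mathbb{R}^n)\hookrightarrow H^s(\mathbb{R}^n)$, so this seminorm is finite and positive). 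It then remains only to check that every remaining error is $o(\varepsilon^{2-2s})$: indeed $\varepsilon^2=o(\varepsilon^{2-2s})$ because $s>0$, $\varepsilon^{n-2}=o(\varepsilon^{2-2s})$ because $n-2>2-2s$ for $n\ge5$, and the denominator error is of order $\varepsilon^n$. Hence $Q_\gamma(v)=S_n-c\,\varepsilon^{2-2s}+o(\varepsilon^{2-2s})$ with $c=\gamma[V_1]_s^2 K_2^{-2/2^*}>0$, and taking $\varepsilon$ small enough produces the desired competitor.

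There is essentially no deep obstacle left at this stage: the hard analytic work was already carried out in Proposition~\ref{prop:Stima_Migliore} and, underneath it, in Lemma~\ref{lem:crucial} --- the latter being precisely where the restriction $n\ge5$ enters, since otherwise the integral $\int_{\mathbb{R}^n}\min\{1,|x|^{-(2n-4)}\}\,dx$ diverges. The one point deserving genuine care is the \emph{sign} of the $\varepsilon^{2-2s}$-coefficient: it must come out negative, and this is exactly the manifestation of the ``wrong sign'' of $-\gamma(-\Delta)^s$, which is what lets the mixed energy dip below the Sobolev threshold; one must also make sure that the lower-order contributions $O(\varepsilon^{n-2})$ introduced by the cut-off do not reinstate a positive term at the order $\varepsilon^{2-2s}$, and the assumption $n\ge5$ is precisely what guarantees this, uniformly in $\gamma\in(0,C_{emb})$. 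Finally, by the very definition \eqref{eq:Def_Sgamma} of $S(\gamma)$ the inequality $Q_\gamma(v)<S_n$ yields $S(\gamma)<S_n$, so that Proposition~\ref{prop:Minimum_Achieved} applies and part~(1) of Theorem~\ref{thm:Main} follows.
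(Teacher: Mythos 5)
Your argument is exactly the paper's: you use the same competitor $U_\varepsilon/\|U_\varepsilon\|_{L^{2^*}(\Omega)}$, the same asymptotics from Lemma~\ref{lem:andamenti}-(ii)--(iii) and the refined Proposition~\ref{prop:Stima_Migliore}, and the same bookkeeping of the error orders to extract the leading negative correction $-\gamma[V_1]_s^2 K_2^{-2/2^*}\varepsilon^{2-2s}$. The supplementary remarks (positivity and finiteness of $[V_1]_s^2$ for $n\ge5$, and the role of Lemma~\ref{lem:crucial} in that restriction) are correct and simply make explicit what the paper leaves implicit.
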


With Proposition~\ref{prop:Using_Ueps} at hand, we can immediately prove the first assertion in our main Theorem~\ref{thm:Main}.
\begin{proof}[Proof (of Theorem~\ref{thm:Main} - (1))]
	Assume that $n\geq 5$
	and let $\gamma\in (0,C_{emb})$. Owing to Proposition
	\ref{prop:Using_Ueps}, we know that $S(\gamma) < S_n$; hence, by Proposition
	\ref{prop:Minimum_Achieved} we conclude that $S(\gamma)$ is achieved, and 
	there exists a weak solution of
	problem \eqref{eq:Problem}.
\end{proof}

\noindent\textbf{2)\,\,The low-dimensional case $n = 3,4$.} In this second case, even if we can still rely on Proposition~\ref{prop:Minimum_Achieved} (which holds for every $n\geq 3$), we \emph{cannot use} the estimate in Proposition~\ref{prop:Stima_Migliore}; rather, we make use of a continuity argument to show that
$$S(\gamma)<S_n\quad\text{for every $\gamma^*<\gamma < C_{emb}$},$$
for some (not explicit) $\gamma^*\in [0,C_{emb})$.
\medskip

To this end, we consider $S(\cdot)$ as a \emph{function of $\gamma$}, defined
in the interval~$\mathcal{J} := (0,C_{emb}$). Owing to the definition of $S(\gamma)$ in \eqref{eq:Def_Sgamma}, we see that
\begin{equation} \label{eq:SgammaMonotona} 
S(\lambda)\leq S(\mu)\quad\text{for every $0<\mu\leq\lambda< C_{emb}$},	
\end{equation}
that is, $S(\cdot)$ is non-increasing on $\mathcal{J}$.

Moreover, we have the following:
\begin{lemma} \label{lem:ContinuityS}
The function $S(\cdot)$ is continuous on $\mathcal{J}$. Furthermore,
\begin{equation} \label{eq:LimitSgammaBoundary} 
\lim_{\gamma\to 0^+}S(\gamma) = S_n \qquad{\mbox{and}}\qquad \lim_{\gamma\to C_{emb}} S(\gamma) = 0.	
\end{equation}
 \end{lemma}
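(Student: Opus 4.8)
The plan is to split Lemma~\ref{lem:ContinuityS} into its three assertions and dispatch each by an elementary comparison built on the two-sided bound \eqref{eq:BoundSgammaUpLow} and on the extremal function for $C_{emb}$ furnished by Remark~\ref{rem:PropX12Usare}-iii). For the \emph{continuity of $S(\cdot)$} the quickest route is a concavity remark: for each fixed admissible $u$ (i.e.\ $u\in\mathcal{X}^{1,2}(\Omega)$ with $\|u\|_{L^{2^*}(\Omega)}=1$) the map $\gamma\mapsto Q_\gamma(u)=\int_\Omega|\nabla u|^2\,dx-\gamma\,[u]_s^2$ is affine in $\gamma$, so $S(\cdot)$, being the pointwise infimum of a family of affine (hence concave) functions, is concave on $\mathcal{J}$; since by \eqref{eq:BoundSgammaUpLow} it takes finite values on the open interval $\mathcal{J}$, it is automatically continuous there. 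One may also argue directly and obtain local Lipschitz continuity: fix a compact subinterval $[a,b]\subset\mathcal{J}$, take $\gamma,\gamma'\in[a,b]$, and let $u$ be admissible with $Q_{\gamma'}(u)\le S(\gamma')+1\le S_n+1$; then estimate \eqref{eq:Stima_Limitatezza} (used with $b$ in place of $\gamma$) gives $\int_\Omega|\nabla u|^2\,dx\le M:=(S_n+1)/(1-b/C_{emb})$, whence $[u]_s^2\le M/C_{emb}$ by \eqref{eq:def_Cemb}, so that $Q_\gamma(u)=Q_{\gamma'}(u)+(\gamma'-\gamma)[u]_s^2\le Q_{\gamma'}(u)+(M/C_{emb})\,|\gamma-\gamma'|$; taking the infimum over such near-minimizers of $S(\gamma')$ and then interchanging the roles of $\gamma$ and $\gamma'$ yields $|S(\gamma)-S(\gamma')|\le (M/C_{emb})\,|\gamma-\gamma'|$.

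For the \emph{boundary limits}, the one at $0^+$ is immediate from the squeeze $(1-\gamma/C_{emb})\,S_n\le S(\gamma)\le S_n$ in \eqref{eq:BoundSgammaUpLow}, which forces $S(\gamma)\to S_n$ as $\gamma\to0^+$. For the limit at $C_{emb}^-$ I use the minimizer $\Phi_0\in\mathcal{X}^{1,2}(\Omega)$ of Remark~\ref{rem:PropX12Usare}-iii), which satisfies $[\Phi_0]_s^2=1$ and $\|\nabla\Phi_0\|_{L^2(\Omega)}^2=C_{emb}$; in particular $\Phi_0\not\equiv0$, so $0<\|\Phi_0\|_{L^{2^*}(\Omega)}<\infty$ by the embedding in Remark~\ref{rem:PropX12Usare}-i). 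Plugging the competitor $v:=\Phi_0/\|\Phi_0\|_{L^{2^*}(\Omega)}$ into \eqref{eq:Def_Sgamma} gives
\[
S(\gamma)\le Q_\gamma(v)=\frac{\|\nabla\Phi_0\|_{L^2(\Omega)}^2-\gamma\,[\Phi_0]_s^2}{\|\Phi_0\|_{L^{2^*}(\Omega)}^2}=\frac{C_{emb}-\gamma}{\|\Phi_0\|_{L^{2^*}(\Omega)}^2},
\]
and the right-hand side tends to $0$ as $\gamma\to C_{emb}^-$; since $S(\gamma)>0$ on $\mathcal{J}$ by \eqref{eq:BoundSgammaUpLow}, this proves $\lim_{\gamma\to C_{emb}^-}S(\gamma)=0$, completing \eqref{eq:LimitSgammaBoundary}.

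There is no serious obstacle in the argument; the only point deserving attention is that in the direct Lipschitz approach the a priori bound on minimizing sequences degenerates as $\gamma\uparrow C_{emb}$ (the factor $1-\gamma/C_{emb}$ vanishes), which is why one restricts to compact subintervals of $\mathcal{J}$ — a nuisance that the concavity argument sidesteps entirely.
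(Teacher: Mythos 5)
Your proof is correct, and the continuity argument is genuinely different from the paper's. The paper first records that $S(\cdot)$ is non-increasing (so one-sided limits exist) and then runs an $\e$--$\delta$ argument from both sides: picking near-minimizers $u$ for $S(\gamma_0)$, extracting a uniform bound $[u]_s^2\le\Theta(\gamma_0)$ from \eqref{eq:def_Cemb} and \eqref{eq:BoundSgammaUpLow}, and using the affine form of $Q_\gamma(u)$ in $\gamma$ to control $S(\gamma)-S(\gamma_0)$. Your concavity observation sidesteps all of that: since $\gamma\mapsto Q_\gamma(u)$ is affine for each admissible $u$, the infimum $S(\cdot)$ is concave on $\mathcal{J}$, and a finite concave function on an open interval is automatically continuous --- you even get local Lipschitz continuity for free from concavity plus the two-sided bound in \eqref{eq:BoundSgammaUpLow}, without ever estimating $[u]_s^2$. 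Your second, ``direct'' Lipschitz argument is essentially the paper's mechanism recast symmetrically. For the boundary limits your treatment matches the paper's (squeeze at $0^+$, competitor $\Phi_0$ at $C_{emb}^-$), and you are in fact slightly more careful than the paper at $C_{emb}^-$: the paper plugs $\Phi_0$ directly into the constrained infimum without normalizing by $\|\Phi_0\|_{L^{2^*}(\Omega)}$, whereas you correctly use $v=\Phi_0/\|\Phi_0\|_{L^{2^*}(\Omega)}$; the conclusion is the same since the normalization is a fixed positive constant, but your version is the literally correct one.
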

 \begin{proof}
 To ease the readability, we split the proof into two steps.
 \medskip
 
 \textsc{Step I):} We begin by proving the continuity of $S(\cdot)$ in $\mathcal{J}$. To this end we first observe that, since $S(\cdot)$ is non-increasing, for every $\gamma\in \mathcal{J}$ we have
  $$\exists\,\,\,{S}(\gamma-) := \lim_{\mu\to \gamma^-}{S}(\mu)\in\R\quad
  \text{and}\quad \exists\,\,\,{S}(\gamma+) := \lim_{\mu\to \gamma^+}{S}(\mu)\in\R.$$ 

We now fix $\gamma_0 \in\mathcal{J}$ and $r > 0$ such that $[\gamma_0-r,\gamma_0+r]\subseteq\mathcal{J}$ and we show that~${S}(\cdot)$ is 
  continuous from both the left and the right at~$\gamma_0$, that is,
  $${S}(\gamma_0-) = {S}(\gamma_0+) = {S}(\gamma_0).$$
  
  To prove the left-continuity, we proceed as follows. First of all,
  given any $\e\in(0,1)$, 
  by the definition of ${S}(\gamma_0)$ we infer that there exists 
  $u = u_{\e,\gamma_0}\in \mathcal{X}^{1,2}(\Omega)$ such that
  \begin{equation} \label{eq:choiceuquasiminimo}
\|u\|_{L^{2^*}(\Omega)} = 1\qquad\text{and}\qquad 
   {S}(\gamma_0)\leq {Q}_{\gamma_0}(u) < {S}(\gamma_0)+\frac{\e}{2}.  	
  \end{equation}
  From this, using the monotonicity of ${S}(\cdot)$,  for every $\gamma \leq \gamma_0$ we obtain
  \begin{equation}\label{bvcnmxoi3r48y328tyrfjkq098765}\begin{split}
   & 0 \leq {S}(\gamma)-{S}(\gamma_0)
   \leq {Q}_{\gamma}(u)-{S}(\gamma_0)\\
   & \qquad = \big({Q}_{\gamma_0}(u)-{S}(\gamma_0)\big)
   + (\gamma_0-\gamma)[u]^2_s < \frac{\e}{2}+(\gamma_0-\gamma)[u]^2_s.
  \end{split}\end{equation}
  
Moreover, exploiting \eqref{eq:def_Cemb}, 
$$ Q_{\gamma_0}(u)=\|\nabla u\|^2_{L^2(\Omega)}-\gamma_0[u]^2_s\ge 
\left(1-\frac{\gamma_0}{C_{emb}}\right)\|\nabla u\|^2_{L^2(\Omega)}.$$
Therefore, using again~\eqref{eq:def_Cemb} and recalling~\eqref{eq:choiceuquasiminimo},
\begin{equation*}\begin{split}
&[u]^2_s \leq \frac{1}{C_{emb}}\|\nabla u\|^2_{L^2(\Omega)} 
\le\frac{1}{C_{emb}}\left(1-\frac{\gamma_0}{C_{emb}}\right)^{-1}
Q_{\gamma_0}(u)\\
&\qquad<\frac{1}{C_{emb}-\gamma_0}\left(S(\gamma_0)+\frac{\e}{2}\right)\le \frac{S(\gamma_0)+1}{C_{emb}-\gamma_0}
.\end{split}
\end{equation*}
From this and \eqref{eq:BoundSgammaUpLow} we thus obtain that
\begin{equation} \label{eq:StimaUnifGagliardo}
 [u]^2_s <  \frac{S_n+1}{C_{emb}-\gamma_0} \le
\frac{S_n+1}{C_{emb}-\gamma_0-r} =:\Theta(\gamma_0).
\end{equation}

Using this estimate together with~\eqref{bvcnmxoi3r48y328tyrfjkq098765},
we find that
\begin{equation*} 
0 \leq {S}(\gamma)-{S}(\gamma_0)
< \frac{\e}{2}+(\gamma_0-\gamma)\Theta(\gamma_0).\end{equation*}
As a consequence, setting 
\begin{equation}\label{vbncmd322oyt3ugweh0987654kjhgfd}
\delta = \delta_{\e,\gamma_0} := \min\left\{r, \frac{\e}{2\Theta(\gamma_0)}\right\} >0,\end{equation} we conclude that,
for every $ \gamma\in(\gamma_0-\delta, \gamma_0]$,
  $$0 \leq {S}(\gamma)-{S}(\gamma_0) < 
  \e$$
and this proves that ${S}(\cdot)$  is continuous from the left at $\gamma_0$.

As regards the continuity from the right, we proceed {essentially as above},
  but we exploit in a more crucial way the uniform estimate \eqref{eq:StimaUnifGagliardo}.
  First of all, given~$\e > 0$ and~$\gamma\in[\gamma_0,\gamma_0+r]$, we let
  $u = u_{\e,\gamma}\in\mathcal{X}^{1,2}(\Omega)$ be such that
  $$\|u\|_{L^{2^*}(\Omega)} = 1\qquad\text{and}\qquad {S}(\gamma)\leq {Q}_{\gamma}(u) < {S}(\gamma)+\frac{\e}{2}.$$
  From this, by the monotonicity of ${S}(\cdot)$, we obtain 
  \begin{align*}
   & 0 \leq {S}(\gamma_0)-{S}(\gamma)
   \leq {Q}_{\gamma_0}(u)-{S}(\gamma) \\
   & \qquad = \big({Q}_{\gamma}(u)-{S}(\gamma)\big)
   + (\gamma-\gamma_0)[u]^2_s \quad < \frac{\e}{2}+(\gamma-\gamma_0)[u]^2_s.
  \end{align*}

Furthermore, by arguing exactly as above
to obtain the estimate in~\eqref{eq:StimaUnifGagliardo} (with~$\gamma_0$ here replaced by $\gamma$), we obtain that
  \begin{equation*}
[u]^2_s  < \frac{1}{C_{emb}-\gamma}\left(S(\gamma)+\frac{\e}{2}\right)
	 \leq \frac{S_n+1}{C_{emb}-\gamma}< \frac{S_n+1}{C_{emb}-\gamma_0-r} = \Theta(\gamma_0).
\end{equation*}
As a consequence, taking~$\delta$ as in~\eqref{vbncmd322oyt3ugweh0987654kjhgfd},
we conclude that, for every~$\gamma \in[\gamma_0,\gamma_0+\delta_\e)$,
$$0 \leq {S}(\gamma_0)-{S}(\gamma) < \e.$$
We then conclude that ${S}(\cdot)$ 
is also continuous from the right at $\gamma_0$, and thus, by the arbitrariness of $\gamma_0$, $S(\cdot)\in C(\mathcal{J})$.
  \medskip
  
  \textsc{Step II):} We now prove the limits in~\eqref{eq:LimitSgammaBoundary}. The limit of $S(\cdot)$ as~$\gamma\to 0^+$ is a direct consequence of
  \eqref{eq:BoundSgammaUpLow}, hence we now focus on the limit as $\gamma\to C_{emb}$. 
  
We recall Remark~\ref{rem:PropX12Usare}-iii) and we take~$\Phi_0\in\mathcal{X}^{1,2}(\Omega)$ such that
$$[\Phi_0]_s^2 = 1\qquad\text{and}\qquad \|\nabla\Phi_0\|^2_{L^2(\Omega)} = C_{emb}.$$ 
Then, we have that, for every $\gamma\in\mathcal{J}$,
\begin{align*}
0\leq S(\gamma) \leq Q_\gamma(\Phi_0) = (C_{emb}-\gamma)[\Phi_0]^2_s = C_{emb}-\gamma.
\end{align*}
Hence, by letting $\gamma\to C_{emb}$ in the above estimate, we obtain the desired limit. This ends the proof.
\end{proof}
  
Thanks to Lemma~\ref{lem:ContinuityS}, we can prove the second assertion in Theorem~\ref{thm:Main}.

\begin{proof}[Proof (of Theorem~\ref{thm:Main} - (2))]
	Assume that $n = 3,4$ and recall the notation~$\mathcal{J}=(0,C_{emb})$. 
	
By~\eqref{eq:BoundSgammaUpLow}, we know that~$S(\gamma) > 0$ for all~$\gamma\in\mathcal{J}$. Thus,
owing to~\eqref{eq:SgammaMonotona} and Lemma~\ref{lem:ContinuityS}, we have that
	\begin{equation}\label{OSM:CO}{\mbox{either $S(\mathcal{J}) = (0,S_n)$ or~$S(\mathcal{J}) = (0,S_n]$}}.\end{equation}
  Concerning this statement, we point that, in our setting, we are not able to
	exclude that $S_n\in S(\mathcal{J})$, as we cannot guarantee that $S(\gamma) < S_n$ on $\mathcal{J}$,
	and this is the reason for which the second option in~\eqref{OSM:CO} cannot be excluded.
	 
	In any case, it follows from~\eqref{OSM:CO} that
	$$\{\gamma\in\mathcal{J}:\,S(\gamma)<S_n\}\ne\varnothing.$$
	As a consequence, we can define
	$$\gamma^* := \inf\{\gamma\in\mathcal{J}:\,S(\gamma)<S_n\}.$$ 
	We also deduce from~\eqref{OSM:CO} that $\gamma^*\in[0,C_{emb})$.
	Moreover,
	\begin{itemize}
		\item[i)] $S(\gamma) = S_n$ for all $\gamma\in(0, \gamma^*)$;
		\item[ii)] $0<S(\gamma)<S_n$ for all $\gamma\in(\gamma^*,C_{emb})$.
	\end{itemize}
	Hence, for every fixed $\gamma\in(\gamma^*,C_{emb})$, we  can
	 apply Proposition
	\ref{prop:Minimum_Achieved}, ensuring that~$S(\gamma)$ is achieved and 
	that there exists a weak solution of
	problem \eqref{eq:Problem}.
	\end{proof}
 
\end{document}